\documentclass[11pt]{article}
\pdfoutput=1
\usepackage[T1]{fontenc}
\usepackage[utf8x]{inputenc}
\usepackage{graphicx}
\usepackage{caption}
\usepackage{subcaption}
\usepackage{amsmath}
\usepackage{amssymb}
\usepackage{epsfig}
\usepackage{changebar}
\usepackage{color}
\usepackage{latexsym,amsfonts,amscd,amsthm}
\usepackage{fancyhdr}
\usepackage{chemarr}
\usepackage{tikz}
\usepackage{cite}
\usepackage{enumerate}

\numberwithin{equation}{section}

\newcounter{abc}

\newcounter{roem}

\newtheorem{theorem}{Theorem}[section]
\newtheorem{lemma}[theorem]{Lemma}
\newtheorem{proposition}[theorem]{Proposition}

\theoremstyle{definition}

\newtheorem{remark}{Remark}

\def\R{\mathbb{R}}

\def\D{\mathcal D}

\def\V{\mathcal V}

\def\O{\mathcal O}

\DeclareMathOperator{\rank}{rank}
\DeclareMathOperator{\im}{im}

\newcommand{\abs}[1]{\ensuremath{\left\vert#1\right\vert}}
\newcommand{\norm}[2][\relax]{\ifx#1\relax \ensuremath{\left\Vert#2\right\Vert} \else \ensuremath{\left\Vert#2\right\Vert_{#1}}\fi}

\newcommand{\footnoteremember}[2]{%
  \footnote{#2}
  \newcounter{#1}
  \setcounter{#1}{\value{footnote}}
}
\newcommand{\footnoterecall}[1]{%
  \footnotemark[\value{#1}]
}

\begin{document}

\title{Quasi-steady state reduction for the Michaelis-Menten reaction-diffusion system}

\author{Martin Frank\footnote{MathCCES, RWTH Aachen, 52056 Aachen, Germany}, Christian Lax\footnoteremember{1}{Lehrstuhl A f. Mathematik, RWTH Aachen, 52056 Aachen, Germany}, Sebastian Walcher\footnoterecall{1}\footnote{Corresponding author: walcher@matha.rwth-aachen.de}, Olaf Wittich\footnoterecall{1}}

\maketitle 

\begin{abstract}
 The Michaelis-Menten mechanism is probably the best known model for an enzyme-catalyzed reaction. For spatially homogeneous concentrations, QSS reductions are well known, but this is not the case when chemical species are allowed to diffuse. We will discuss QSS reductions for both the irreversible and reversible Michaelis-Menten reaction in the latter case, given small initial enzyme concentration and slow diffusion. Our work is based on a heuristic method to obtain an ordinary differential equation which admits reduction by Tikhonov-Fenichel theory. We will not give convergence proofs but we provide numerical results that support the accuracy of the reductions. \\
{\textbf{MSC2010:} 92C45, 34E15, 80A32, 35B40}\\
{\textbf{Keywords:} reaction-diffusion equations, enzyme, singular perturbations}
\end{abstract}

\section{Introduction}

The Michaelis-Menten mechanism \cite{michaelismenten} is probably the best known model for an enzyme-catalyzed reaction. In this reaction network, a substrate $S$ and an enzyme $E$ combine to form a complex $C$, which degrades back to substrate and enzyme, or to product P and enzyme. In the reversible setting there is also a back reaction combining $E$ and $P$ to complex. The reaction scheme thus reads
   \[
    E+S \xrightleftharpoons[k_{-1}]{k_{1}} C \xrightleftharpoons[k_{-2}]{k_{2}} E+P.
  \]  
In the irreversible case one assumes that product and enzyme cannot combine to form complex, i.e. one has $C\xrightarrow{k_{2}}E+P$.  Typically, no complex or product are assumed present initially. 
Assuming mass action kinetics and spatially homogeneous concentrations, the evolution of the concentrations $(s,e,c,p)$ of $S,E,C,P$ can be described by a system of four ordinary differential equations, from which by stoichiometry one obtains  a two-dimensional system (first discussed from a  mathematical perspective  by Briggs and Haldane \cite{briggshaldane}). Employing the familiar quasi-steady state (QSS) assumption for complex, based on  small initial concentration of enzyme, further reduces the system to dimension one.\\ 
For reaction systems, quasi-steady state (QSS) assumptions frequently lead to singular perturbation problems for which the classical theories of Tikhonov \cite{tikh} and Fenichel \cite{fenichel} are applicable. (Moreover, one should note results by Hoppensteadt \cite{Hoppensteadt} on unbounded time intervals; see also \cite{lws}.)  \\
For spatially inhomogeneous concentrations in a reaction vessel, thus for reaction-diffusion systems, Tikhonov's and Fenichel's theory is not applicable since their fundamental results are limited to finite dimensional systems. Therefore, reaction-diffusion systems are much more difficult to analyze, and only partial results are known. As for the Michaelis-Menten reaction with diffusion and small initial enzyme concentration, Britton \cite{britton} and Yannacopoulos et al. \cite{Yannacopoulos} derived QSS reductions with the additional assumptions of immobile complex and enzyme. Kalachev et al. \cite{kkkpz} used asymptotic expansions with respect to a small parameter to obtain results about the behavior of the solutions under different time scales for diffusion, with the diffusion time scale different from the time scale for the slow reaction part. (As \cite{kkkpz} indicates, even finding candidates for reduced reaction-diffusion systems may be a nontrivial task.)  Starting from different assumptions about the reaction mechanism (viz., smallness of certain rate constants), Bothe and Pierre \cite{BothePierre1} as well as Bisi et al. \cite{bisi} discussed reductions for a related system, including convergence proofs. \\
In the present paper we will discuss QSS reductions for both the irreversible and the reversible Michaelis-Menten reaction with diffusion, under the conditions of small initial enzyme concentration and slow diffusion. Our work is based on a heuristic method described in \cite{laxgoeke}, which utilizes a spatial discretization to obtain an ordinary differential equation which admits reduction by Tikhonov-Fenichel theory. In many relevant cases, the reduced ODE system can, in turn,  be identified as the spatial discretization of another partial differential equation system. This resulting PDE is a candidate for the reduced system and, as pointed out in \cite{laxgoeke}, it is the only possible candidate. In the present paper we will not discuss convergence issues, which seem to be quite technically  involved, but we provide numerical simulations that support the accuracy of the reduction. \\
The plan of the paper is as follows. In Section 2 we will briefly recall the most important aspects of the spatially homogeneous system  and moreover note some general features of  the inhomogeneous case. \\
In Section 3 the ``classical'' QSS assumption is discussed, i.e. we assume small initial enzyme concentration and slow diffusion. We first review some relevant results from the literature, and give an informal description of the reduction procedure from \cite{laxgoeke}. Following a (degenerate) scaling similar to the one in Heineken, Tsuchiya and Aris \cite{hta} we derive a reduction via the approach in \cite{laxgoeke}; to the authors' knowledge, the form of the reduced PDE system has not been known in the literature to date. \\ The reduction is consistent with the spatially homogeneous case, thus setting the diffusion constants equal to zero yields the usual Michaelis-Menten equation. The degenerate scaling seems unavoidable in the PDE case (while one can circumvent it for the ODE), thus we need to go beyond the classical singular perturbation reduction due to Tikhonov and Fenichel. The scaling requires a consistency condition which is intuitively likely to hold in general; we can justify it mathematically in the case when enzyme and complex diffuse at the same rate. In Section 4 we present numerical simulations which exhibit very good agreement with the reduced system. \\
In the Appendix, employing the heuristic method from \cite{laxgoeke}, we carry out the necessary computations for the reductions and also determine suitable initial values for the reduced system.

\section{Preliminaries}

\subsection{The spatially homogeneous setting}
We recall some facts about the Michaelis-Menten reaction with homogeneously distributed concentrations. The evolution of the concentrations $(s,e,c,p)$ of $S,E,C,P$ is governed by the four-dimensional ordinary differential equation
  \begin{align*}
   &\dot s=-k_1es+k_{-1}c\\
   &\dot e=-k_1es+(k_{-1}+k_2)c-k_{-2}ep\\
   &\dot c=k_1es-(k_{-1}+k_2)c+k_{-2}ep\\
   &\dot p=k_2c-k_{-2}ep.
  \end{align*}
This system admits the (stoichiometric) first integrals $\Psi_1(s,e,c,p)=e+c$ and $\Psi_2(s,e,c,p)=s+c+p$. Therefore a two-dimensional system remains:
  \begin{align}
   &\dot s=-k_1e_0s+(k_1s+k_{-1})c\label{mm1}\\
   &\dot c=k_1e_0s-(k_1s+k_{-1}+k_2)c+k_{-2}(e_0-c)(s_0-s-c)\label{mm2},
  \end{align}
where $s_0$ and $e_0$ are the initial concentrations of $S$ and $E$, and initially no product $P$ or complex $C$ are present. The system is called irreversible whenever $k_{-2}=0$, and reversible otherwise. The most common quasi-steady state assumption is that the initial enzyme concentration is small, one considers $e_0=\varepsilon e_0^*$ in the asymptotic limit $\varepsilon\to 0$, for the irreversible system.\\
 Heineken, Tsuchiya and Aris \cite{hta} were the first to discuss the Michaelis-Menten system from the perspective of singular perturbations, and Segel and Slemrod \cite{ss} were the first to directly prove a rigorous convergence result for the unbounded time interval: Writing \eqref{mm1}--\eqref{mm2} in the slow time scale $\tau=\varepsilon t$
  \begin{align}
   &s'=-k_1se_0^*+\varepsilon^{-1}(k_1s+k_{-1})c\label{mm1slow}\\
   &c'=k_1se_0^*-\varepsilon^{-1}(k_1s+k_{-1}+k_2)c\label{mm2slow},
  \end{align}
the solutions of \eqref{mm1slow}--\eqref{mm2slow} converge for all $t_0>0$ uniformly on $[t_0,\infty)$ to the solutions of
  \begin{equation}\label{red}
   s'= - \frac{k_1k_2se_0^*}{k_1s+k_{-1}+k_2}
  \end{equation}
on the asymptotic slow manifold $\V=\{(s,0),\ s\geq0\}$ as $\varepsilon\to0$. (Below we will sometimes change between time scales without mentioning this explicitly.) \\
Both the approach by Heineken et al. \cite{hta} and the proof by Segel and Slemrod \cite{ss} use appropriate scalings of the variables, in particular they introduce $z:=c/e_0$. It is possible to avoid such a scaling, which becomes degenerate as $e_0\to 0$, in the spatially homogeneous case (see e.g. \cite{gw}) but as it turns out we will need to utilize such a degenerate scaling to obtain a reduction when concentrations are not homogeneously distributed in the reaction vessel.\\
We will also discuss the reversible Michaelis-Menten system, which appears less frequently in the literature; in part this may be due to the unwieldy expression for the QSS reduction; see Miller and Alberty \cite{MiAl}. The singular perturbation reduction (see \cite{nw11} and \cite{gwz2}) of \eqref{mm1slow}--\eqref{mm2slow} for $k_{-2}>0$ and $e_0=\varepsilon e_0^*$ leads to 
  \begin{equation}\label{redrev}
   s'= - \frac{(k_1k_2s+k_{-1}k_{-2}(s-s_0))e_0^*}{k_1s+k_{-2}(s_0-s)+k_{-1}+k_2}
  \end{equation}
on the asymptotic slow manifold $\V=\{(s,0),\ s\geq0\}$ as $\varepsilon\to0$. (Here, uniform convergence again holds on $[t_0,\infty)$; see \cite{lws}). Both the QSS and the singular perturbation reductions agree up to first order in the small parameter; see  \cite{gwz2}.

\subsection{The spatially inhomogeneous setting}
When the concentrations are inhomogeneously distributed and diffusion is present then the system is described by a reaction-diffusion equation. Thus, let $\Omega$ be a bounded region with a smooth boundary and let $\delta_s,\delta_e,\delta_c,\delta_p\geq0$ denote the diffusion constants. The governing equations are 
  \begin{align}
   \partial_{t} s&= \delta_s \Delta s-k_1se+k_{-1} c, &\text{in } (0,\infty)\times \Omega \label{mm1diff}\\
   \partial_{t} e&= \delta_e\Delta e -k_1se+ (k_{-1}+k_2) c-k_{-2}ep,  &\text{in } (0,\infty)\times \Omega \label{mm2diff} \\
   \partial_{t} c&= \delta_c\Delta c +k_1se-(k_{-1}+k_2) c+k_{-2}ep,  &\text{in } (0,\infty)\times \Omega \label{mm3diff} \\
   \partial_{t} p&= \delta_p\Delta p+k_2 c -k_{-2}ep,  &\text{in } (0,\infty)\times \Omega \label{mm4diff}
  \end{align}
with continuous initial values
  \[
   s(0,x)=s_0(x),\quad e(0,x)=e_0(x),\quad c(0,x)=c_0(x),\quad p(0,x)=p_0(x),\quad \text{in }  \Omega
  \]
and one has Neumann boundary conditions
  \[
   \frac{\partial s}{\partial \nu}=\frac{\partial e}{\partial \nu}=\frac{\partial c}{\partial \nu}=\frac{\partial p}{\partial \nu}=0,\quad  \text{in } (0,\infty)\times \partial\Omega
  \]
with $\frac{\partial }{\partial \nu}$ denoting the outer normal derivative. 
We collect a few general properties.

\begin{remark}\label{remark1}
\begin{itemize}
\item From Smith \cite{Smith}, Ch.~7, Thm.~3.1 and Cor.~3.2--3.3  one sees that  all the solution entries remain nonnegative for all $t>0$ whenever they are nonnegative at $t=0$. Moreover, Bothe and Rolland \cite{BotheRolland1} (see in particular Remark 1) have shown that there exists a classical solution of {class $C^{\infty}$} whenever one has initial values of class $W^{s,p}(\Omega;\R^4_+)$ for $p>1$, $s>0$.
\item When $\delta_e=\delta_c$ then 
\[
\partial_t(e+c)=\delta_e\Delta(e+c)
\]
and as a consequence of the strong maximum principle (see Smith \cite{Smith} Theorem 2.2) $e+c$ is uniformly bounded by ${\rm max}(e_0+c_0)$ for all $t\geq 0$.\\
Furthermore, in the case that $\delta_s=\delta_e=\delta_c=\delta_p$ one gets
\[
\partial_t(s+e+2c+p)=\delta_e\Delta(s+e+2c+p), 
\]
whence $s+e+2c+p$ is bounded by ${\rm max}(s_0+e_0+2c_0+p_0)$ for all $t\geq 0$; in particular nonnegativity implies that every component is bounded.   
\item The stoichiometric first integrals of the spatially homogeneous setting survive as conservation laws
\[\frac{1}{\abs{\Omega}}\int_{\Omega}e(0,x)+c(0,x)\: dx=\frac{1}{\abs{\Omega}}\int_{\Omega}e_0(x)+c_0(x)\: dx\]
resp.
\[\frac{1}{\abs{\Omega}}\int_{\Omega}s(0,x)+c(0,x)+p(0,x)\: dx=\frac{1}{\abs{\Omega}}\int_{\Omega}s_0(x)+c_0(x)+p_0(x)\: dx,\]
but a reduction of dimension (i.e.,  elimination of certain variables) is no longer possible. 
\item In the irreversible case one may consider only the first three equations as their right-hand sides do not depend on $p$.
\item Results regarding the long time behavior of solutions of the reversible Michaelis-Menten reaction can be found in Elia\v{s} \cite{elias}.
\end{itemize}
\end{remark}


\section{Reduction given slow diffusion and small initial enzyme concentration}

\subsection{Review of results in the literature}
As noted above, there exists no counterpart to Tikhonov's and Fenichel's theorems for infinite dimensional systems, hence the reduction of reaction-diffusion equations is not possible in a similarly direct manner.\\
Regarding the reduction of the Michaelis-Menten reaction with diffusion, one sometimes finds the one-dimensional equation \eqref{red} augmented by a diffusion term for substrate, with no further argument given. This ad-hoc method is problematic, since it amounts to ignoring diffusion in the reduction step.
The appropriate approach is to start with the full system \eqref{mm1diff}--\eqref{mm4diff} and consider possible reductions in the limiting case of small initial concentration for enzyme, with slow diffusion. This will be the vantage point in the present paper.\\
With regard to such an approach, the authors are aware only of three papers for the irreversible system (i.e. \eqref{mm1diff}--\eqref{mm3diff} with $k_{-2}=0$). Yannacopoulos et al. \cite{Yannacopoulos} assumed $P$ and $C$ to be immobile (i.e. $\delta_e=\delta_c=0$; see their equation (71)) and gave a second order approximation for the case of a one dimensional domain (see in particular equation (80) which in lowest order reduces to the Michaelis-Menten equation for substrate, augmented by diffusion). Britton \cite{britton}, Ch.~8 gave the first order approximation
    \begin{equation}\label{immobile}
     \partial_{\tau} s= \delta_s \Delta s- \frac{k_1k_2s{(e_0+c_0)}}{k_1s+k_{-1}+k_2}
    \end{equation}
which is in agreement with the lowest order terms given in \cite{Yannacopoulos}. He made no assumptions on diffusion constants for enzyme or complex, and instead started with system \eqref{mm1}--\eqref{mm2}, augmented by diffusive terms for $s$ and $c$. This is problematic because the elimination of $e$ via stoichiometry is no longer possible when diffusion is present. Therefore Britton's  approach is limited to the case considered by Yannacopoulos at al. \cite{Yannacopoulos}. \\
Kalachev et al. \cite{kkkpz} started from \eqref{mm1diff}--\eqref{mm3diff} and considered up to three time scales, with the  slow reaction part of order $\varepsilon$ (the total initial mass of enzyme divided by the total initial mass of substrate), a fast reaction part, and diffusion of order $\delta$, deriving asymptotic expansions for the solutions and reductions in different time regimes. They did not discuss the case that slow reaction and diffusion are in the same time scale (i.e., $\delta=\varepsilon$) which we will consider. (In \cite{kkkpz}, Remark 1.2 further work was announced for this case, but apparently this has not been published yet.)

\subsection{Informal review of the reduction heuristics}\label{heuristic}
We will employ a heuristic method to construct a candidate for a reduced system that was introduced in \cite{laxgoeke}.  In contrast to the convergence property for the ODE after discretization (which is a consequence of Tikhonov's and Fenichel's theorems) we will not prove convergence here; generally this seems a very hard task (see Section \ref{conclrem}). However, as remarked in \cite{laxgoeke}, Proposition 4.3, the reduced PDE determined by the heuristics represents the only possible reduction of the reaction-diffusion system as $\varepsilon\to 0$. \\
Briefly the heuristics can be described as follows: By spatial discretization of a reaction-diffusion system which depends on a small parameter $\varepsilon$, one obtains a system of ordinary differential equations depending on $\varepsilon$. If the ODE system admits a Tikhonov-Fenichel reduction and the reduced ODE is the spatial discretization of another partial differential equation system, then we will call the latter {\em the reduced PDE of the reaction-diffusion system}. (The conditions stated above are frequently satisfied; see e.g.\cite{Lax}.)   The following results are in part based on the second author's doctoral thesis \cite{Lax}. {Detailed computations will be presented in the Appendix.}

\subsection{The irreversible case}\label{mainresultirrev}
In order to determine the reduced PDE systems, we need some preparations.
We consider first the irreversible reaction-diffusion system \eqref{mm1diff}--\eqref{mm3diff}.
Defining total enzyme concentration $y:=e+c$, we get
  \begin{align}
   \partial_{t} s&= \delta_s \Delta s-k_1s(y-c)+k_{-1} c, &\text{in } (0,\infty)\times \Omega \label{mm1diffy}\\
   \partial_{t} c&= \delta_c\Delta c +k_1s(y-c)-(k_{-1}+k_2) c,  &\text{in } (0,\infty)\times \Omega\label{mm2diffy}\\
   \partial_{t} y&= \delta_c\Delta c + \delta_e(\Delta y -\Delta c),  &\text{in } (0,\infty)\times \Omega\label{mm3diffy}
  \end{align}
with initial values $s(0,x)=s_0(x)$, $c(0,x)=c_0(x)$, $y(0,x)=e_0(x)+c_0(x)$. Our basic assumptions are:
\begin{itemize} 
\item Diffusion is slow, and therefore we introduce the scaling 
\[
\delta_z=\varepsilon \delta_z^*\text{  for  }z=s,e,c.
\]
\item  Total enzyme concentration is small for all $t\geq 0$, and therefore we set
\[
 y=\varepsilon y^*{\text{ and } c=\varepsilon c^*},\text{  and also  }e_0=\varepsilon e_0^*,\quad c_0=\varepsilon c_0^*.
\]
\end{itemize}
 Incorporating these assumptions we have
  \begin{align*}
   \partial_{t} s&= \varepsilon\delta_s^* \Delta s+\varepsilon(k_1s+k_{-1}) c^*- \varepsilon k_1s y^*, &\text{in } (0,\infty)\times \Omega \\
   \partial_{t} c^*&= \varepsilon\delta_c^* \Delta c-(k_1s+k_{-1}+k_2) c^* + k_1s y^*,  &\text{in } (0,\infty)\times \Omega\\
   \partial_{t} y^*&= \varepsilon\delta_e^*\Delta y^*+\varepsilon(\delta_c^*-\delta_e^*) \Delta c^*,  &\text{in } (0,\infty)\times \Omega
  \end{align*}
with initial values \[s(0,x)=s_0(x),\quad c^*(0,x)=c_0^*(x),\quad y^*(0,x)=y^*_0(x):=e_0^*(x)+c_0^*(x).\] In slow time $\tau=\varepsilon t$ one now finds
  \begin{align}
   \partial_{\tau} s&= \delta_s^* \Delta s+(k_1s+k_{-1}) c^*- k_1s y^*, &\text{in } (0,\infty)\times \Omega \label{mm1diffskal}\\
   \partial_{\tau} c^*&= \delta_c^* \Delta c^*-\varepsilon^{-1}(k_1s+k_{-1}+k_2) c^* + \varepsilon^{-1} k_1s y^*,  &\text{in } (0,\infty)\times \Omega\label{mm2diffskal}\\
   \partial_{\tau} y^*&= \delta_e^*\Delta y^*+\delta \Delta c^*,  &\text{in } (0,\infty)\times \Omega\label{mm3diffskal}
  \end{align}
with the abbreviation 
\begin{equation}\label{delteq}
\delta:=\delta_c^*-\delta_e^*.
\end{equation} 
We will discuss two different cases: If the diffusion constants $\delta_e^*$ and $\delta_c^*$ are close in the sense that $\delta=\varepsilon \delta^*$, then equation \eqref{mm3diffskal} reads
  \begin{equation}
   \partial_{\tau} y^*= \delta_e^*\Delta y^*+\varepsilon\delta^* \Delta c
  \end{equation}
and the reduced system for $\varepsilon\to 0$ is again a reaction-diffusion system (with a rational reaction term). Otherwise, the reduced system becomes highly nonlinear.

\begin{remark}
The argument is based on the critical assumption that the ``degenerate'' scalings $c^*=\varepsilon^{-1} c$ and $y^*=\varepsilon^{-1} y$ hold for all $t\geq 0$; to state it more precisely, one needs a uniform bound (with respect to $\varepsilon$) for $c^*$ and $y^*$. 
In the special case $\delta_c^*=\delta_e^*$  (e.g. if the molecules of enzyme and complex are of the same size; see Keener and Sneyd \cite{ksI}, Subsection 2.2.2), Remark \ref{remark1} implies that $c^*$ and $y^*$ are uniformly bounded by $e_0^*+c_0^*$. We are not able to extend this property to the case $\delta_c^*\neq\delta_e^*$, but we will verify in the Appendix that the corresponding uniform boundedness property holds for the ODEs obtained via discretization. Furthermore, numerical results indicate that degenerate scaling poses no problem for the Michaelis-Menten system (see Section \ref{numeric}).
\end{remark}

\subsubsection{Irreversible case with $\delta_c^*-\delta_e^*=\O(\varepsilon)$}\label{mainresultirrevsub1}

In this case the reduced PDE (as defined in subsection \ref{heuristic}) is given by
  \begin{align}
   \partial_{\tau} s&= \delta_s^* \Delta s-\frac{k_1k_2y^*s}{k_1s+k_{-1}+k_2}, &\text{in } (0,T)\times \Omega\label{mm1diffred1} \\
   \partial_{\tau} y^*&= \delta_e^*\Delta y^*,  &\text{in } (0,T)\times \Omega \label{mm3diffred1}
  \end{align}
on the asymptotic slow manifold \[\V=\left\{(s,c^*,y^*)\in\R^3_+,\ c^*=\frac{k_1s y^*}{k_1s+k_{-1}+k_2}\right\}.\] 
Appropriate initial values on $\V$ are given by
  $(\tilde s_0,\tilde y_0^*)=\left(s_0,y_0^*\right)$. 
This assertion is a direct consequence of Proposition \ref{irrevdiscred} in the Appendix.\\
Total enzyme concentration in the reduced equation is subject only to diffusion, and there remains a reaction-diffusion equation for substrate, with the reaction part similar to the usual Michaelis-Menten term. It is worth looking at some special cases: When $\delta_e^*=\delta_c^*=0$, $y^*=y_0^*$ is constant in time and we have
    \begin{equation*}
     \partial_{\tau} s= \delta_s \Delta s- \frac{k_1k_2s_0y_0^*}{k_1s+k_{-1}+k_2}
    \end{equation*}
as in Yannacopoulos et al. \cite{Yannacopoulos}, Equation (80) and in Britton \cite{britton}, Ch.~8. Moreover, setting all diffusion constants to zero (and assuming $c_0^*=0$ as well as constant $y_0^*$) leads to the usual spatially homogeneous reduction as given in \eqref{red}.\\
As far as the authors know, this reduced system has not appeared in the literature so far. The numerical simulations in Section \ref{numeric} indicate convergence.

\subsubsection{Irreversible case with $\delta_c^*-\delta_e^*=\O(1)$}\label{mainresultirrevsub2}

In this case the reduction is given by
  \begin{align}
   \partial_{\tau} s&= \delta_s^* \Delta s-\frac{k_1k_2y^*s}{k_1s+k_{-1}+k_2}, &\text{in } (0,T)\times \Omega\label{mm1diffred} \\
   \partial_{\tau} y^*&= \delta_e^*\Delta y^*+\delta \Delta\left(\frac{k_1y^*s}{k_1s+k_{-1}+k_2}\right),  &\text{in } (0,T)\times \Omega \label{mm3diffred}
  \end{align}
on the asymptotic slow manifold \[\V=\left\{(s,c^*,y^*)\in\R^3_+,\ c^*=\frac{k_1s y^*}{k_1s+k_{-1}+k_2}\right\}.\] 
The appropriate initial values are as before (also following from Proposition \ref{irrevdiscred}).\\
This case may be said to correspond to the one mentioned but not treated in Kalachev et al. \cite{kkkpz}; there seems to be no discussion of this in the literature. Note that now the equations for $s$ and $y^*$ are fully coupled; this is a more complex situation than before. Again, numerical simulations (Section \ref{numeric}) are in good agreement with the reduction.

\subsection{The reversible case}\label{mainresultrev}

We will determine a reduced system for the reversible Michaelis-Menten reaction with diffusion, i.e.,
  \begin{align*}
   \partial_{\tau} s&= \delta_s^* \Delta s+(k_1s+k_{-1}) c^*- k_1s y^*, &\text{in } (0,T)\times \Omega \\
   \partial_{\tau} c^*&= \delta_c^* \Delta c^*-\varepsilon^{-1}[(k_1s+k_{-1}+k_{-2}p+k_2) c^* + (k_1s+k_{-2}p) y^*],  &\text{in } (0,T)\times \Omega\\
   \partial_{\tau} y^*&= \delta_e^*\Delta y^*+\delta \Delta c^*,  &\text{in } (0,T)\times \Omega\\
   \partial_{\tau} p&= \delta_p^* \Delta p+(k_{-2}p+k_{2}) c^*- k_{-2}py^*, &\text{in } (0,T)\times \Omega
  \end{align*}
 Here we choose the same scaling as in \eqref{mm1diffskal}--\eqref{mm3diffskal} and additionally we let $\delta_p=\varepsilon \delta_p^*$. If $\delta_c^*-\delta_e^*=\O(\varepsilon)$ then we get
  \begin{align}
   \partial_{\tau} s&= \delta_s^* \Delta s-\frac{(k_1k_2s-k_{-1}k_{-2}p)y^*}{k_1s+k_{-2}p+k_{-1}+k_2}, &\text{in } (0,T)\times \Omega\label{mm1diffred3} \\
   \partial_{\tau} y^*&= \delta_e^*\Delta y^*,  &\text{in } (0,T)\times \Omega \label{mm3diffred3}\\
   \partial_{\tau} p&= \delta_p^*\Delta p+\frac{(k_1k_2s-k_{-1}k_{-2}p)y^*}{k_1s+k_{-2}p+k_{-1}+k_2},  &\text{in } (0,T)\times \Omega \label{mm4diffred3}
  \end{align}
on the asymptotic slow manifold \[\V=\left\{(s,c^*,y^*,p)\in\R^4_+,\ c^*=\frac{(k_1s+k_{-2}p) y^*}{k_1s+k_{-2}p+k_{-1}+k_2}\right\}.\] Note that \eqref{mm3diffred3} is uncoupled from the remaining system.
In case $\delta_c^*-\delta_e^*=\O(1)$ we get
  \begin{align}
   \partial_{\tau} s&= \delta_s^* \Delta s-\frac{(k_1k_2s-k_{-1}k_{-2}p)y^*}{k_1s+k_{-2}p+k_{-1}+k_2}, &\text{in } (0,T)\times \Omega\label{mm1diffred4} \\
   \partial_{\tau} y^*&= \delta_e^*\Delta y^*+\delta\Delta\left(\frac{(k_1s+k_{-2}p)y^*}{k_1s+k_{-2}p+k_{-1}+k_2}\right),  &\text{in } (0,T)\times \Omega \label{mm3diffred4}\\
   \partial_{\tau} p&= \delta_p^*\Delta p+\frac{(k_1k_2s-k_{-1}k_{-2}p)y^*}{k_1s+k_{-2}p+k_{-1}+k_2},  &\text{in } (0,T)\times \Omega \label{mm4diffred4}
  \end{align}
on the same asymptotic slow manifold, 
but here one has a fully coupled system for $s$, $y^*$ and $p$.\\
The proofs follow from Proposition \ref{revdiscred}.
In both settings, appropriate initial values are given by
$(\tilde s_0,\tilde y_0^*,\tilde p_0)=\left(s_0,y_0^*,p_0\right)$.\\ 
Again, setting all diffusion constants to zero (and assuming $c_0^*=p_0=0$ as well as constant $y_0^*$ and $s_0$) leads to $s(\tau,x)+p(\tau,x)=s_0$ and thus to the usual reduction as given in \eqref{redrev}.

%

\section{Numerical simulations}\label{numeric}
In the following we will provide numerical results that are in good agreement with the reduction given above. 
The solutions have been obtained using MATLAB's \texttt{pdepe} function. This function solves an initial-boundary value problem for spatially one-dimensional systems of parabolic and elliptic partial differential equations in the self-adjoint form 
$$
C(x,t,u,\partial_x u)\partial_t u = x^{-m} \partial_x (x^m F(x,t,u,\partial_x u)) + S(x,t,u,\partial_x u).
$$
In our case, $m=0$ and $C$ is the identity matrix.
Furthermore, in the case of system \eqref{mm1diffskal}--\eqref{mm3diffskal}, using the unknown $u = ( s, c^*, y^*)^T$, the flux $F$ and the source $S$ become
$$
F = \begin{pmatrix} \delta_s\partial_x s  \\ \delta_c^*\partial_x c^*  \\ \delta_e^*\partial_x y^*  +\delta\partial_x c^* \end{pmatrix}, \quad
S = \begin{pmatrix} (k_1s +k_{-1})c^* -k_1s y^*  \\ \varepsilon^{-1}(k_1s +k_{-1}+k_2) c^*  + \varepsilon^{-1} k_1s  y^*   \\ 0  \end{pmatrix}.
$$
The reduced system \eqref{mm1diffred}--\eqref{mm3diffred}, using the unknown $u = ( s, y^*)^T$, the flux $F$ and the source $S$ become
$$
F = \begin{pmatrix} \delta_s\partial_x s  \\ \frac{\delta k_1y(k_{-1}+k_2)}{(k_1s+k_{-1}+k_2)^2}\partial_xs+(\frac{\delta k_1s}{k_1s+k_{-1}}+\delta_e^*)\partial_x y^*  \end{pmatrix}, \quad
S = \begin{pmatrix} \frac{k_1 k_2 s y^*}{k_1 s+k_{-1}+k_2}  \\  0  \end{pmatrix}.
$$
As boundary conditions, we use homogeneous Neumann boundary conditions, i.e., for each unknown we set the spatial derivative equal to zero at the boundary.

The \texttt{pdepe} function uses a self-adjoint finite difference semi-discretization in space, and solves the obtained system ordinary differential equations by the implicit, adaptive multistep solver \texttt{ode15s}. In all our experiments we have set the tolerances to values below the accuracy we intend to observe (absolute tolerance $10^{-14}$, relative tolerance $10^{-10}$). We have used 100 equidistant grid cells.

Figure \ref{fig:InitialCondition} shows the initial condition we have used; a step function in $s$, a smooth cosine profile for $c$, and a cosine profile with an additional Gaussian bump for $y$. We have set $\delta_s=\delta_e=k_1=k_{-1}=k_2=1$ and $\delta_c=2$ (so $\delta=1$; see case \ref{mainresultirrevsub2}).

Figure \ref{fig:sol10} shows the solutions at time $T=0.005$ for $\varepsilon=1.0$. Already, one can see that the concentration $s$ is described well by the reduced system, whereas we see a discrepancy in $y$. For $\varepsilon=0.0001$, shown in Figure \ref{fig:sol00001}, to the eye there is no difference between the solutions of the original and the reduced systems. In Figure \ref{fig:convergence} we investigate the convergence of the solution of the full system to the solution of the reduced system. The error is measured in the $L^\infty$ norm in all three solution components. As $\varepsilon\to 0$, we observe rather clean first-order convergence in double-logarithmic plot. Finally, we also set $\delta_c=1$ (so $\delta=0$; see case \ref{mainresultirrevsub1}) and measure in Figure \ref{fig:convergenceglsc} and Figure \ref{fig:convergencegly} again the error. This confirms what the theory has predicted.

\begin{figure}
\centering\includegraphics[width=0.8\linewidth]{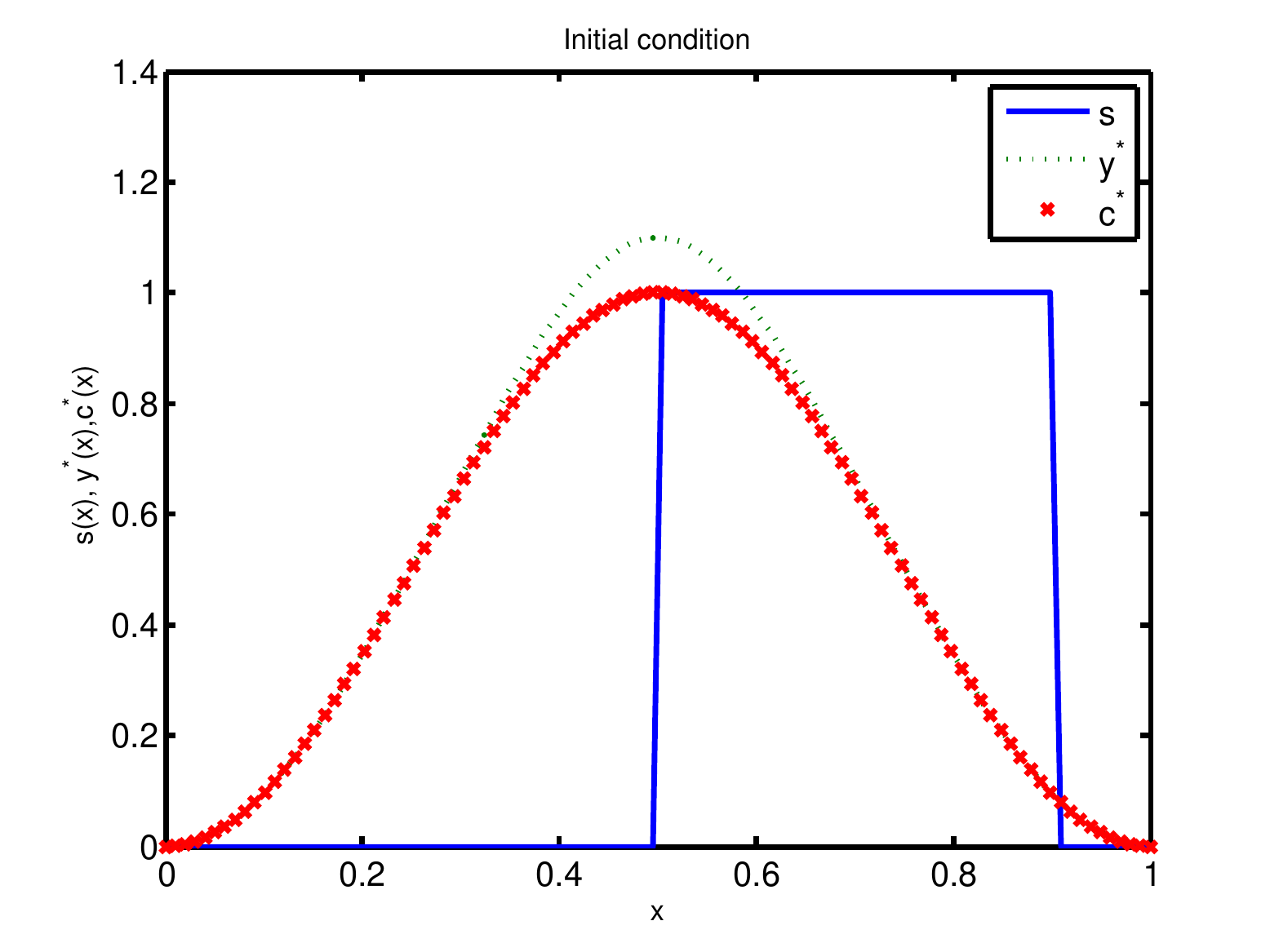}
\caption{Initial condition for $s$, $c^*$ and $y^*$.}
\label{fig:InitialCondition}
\end{figure}

\begin{figure}
\centering\includegraphics[width=0.8\linewidth]{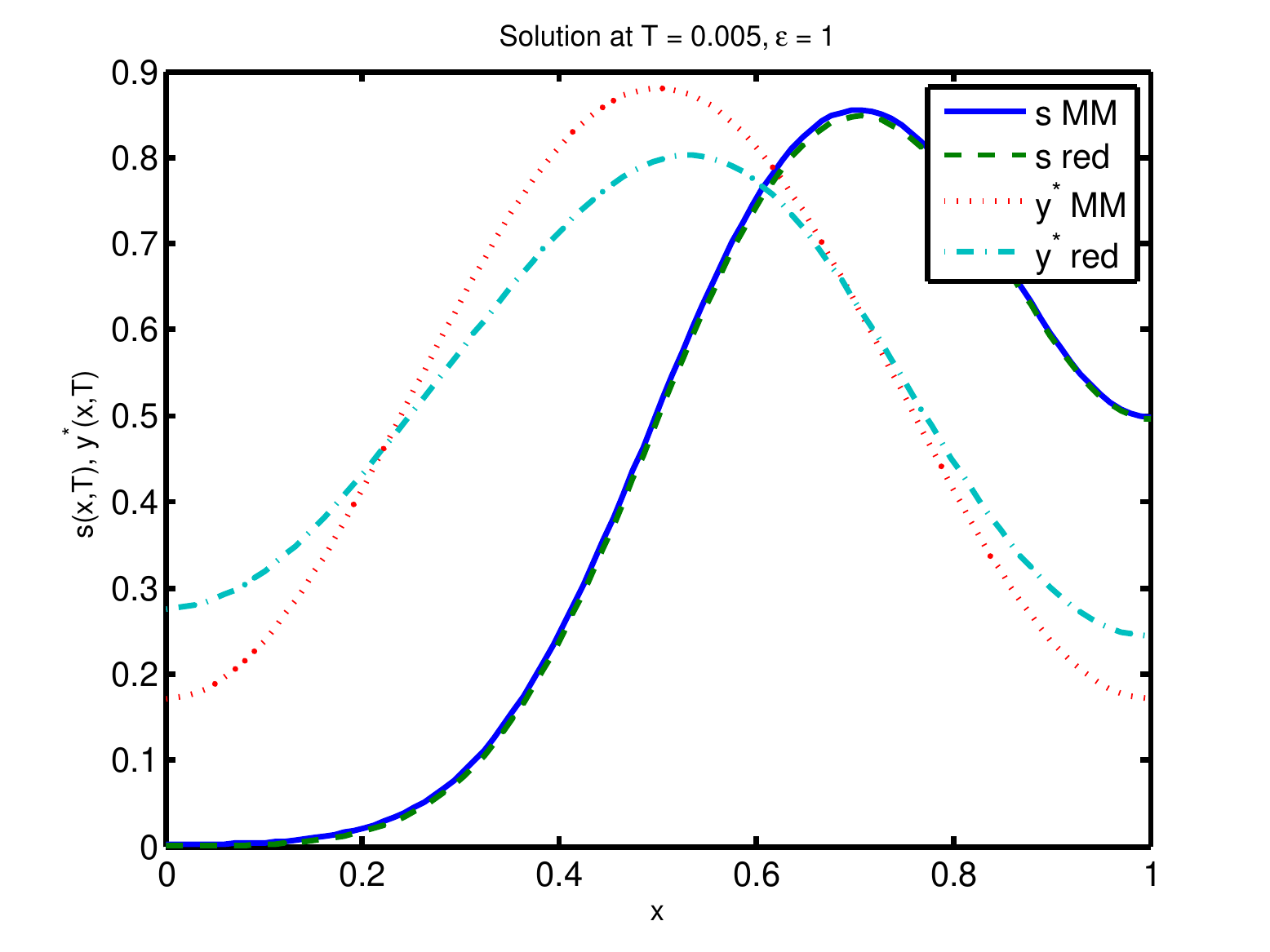}
\caption{Solutions $s$ and $y^*$ at time $T=0.005$. Comparison between Michaelis-Menten and reduced system for $\varepsilon=1.0$.}
\label{fig:sol10}
\end{figure}

\begin{figure}
\centering\includegraphics[width=0.8\linewidth]{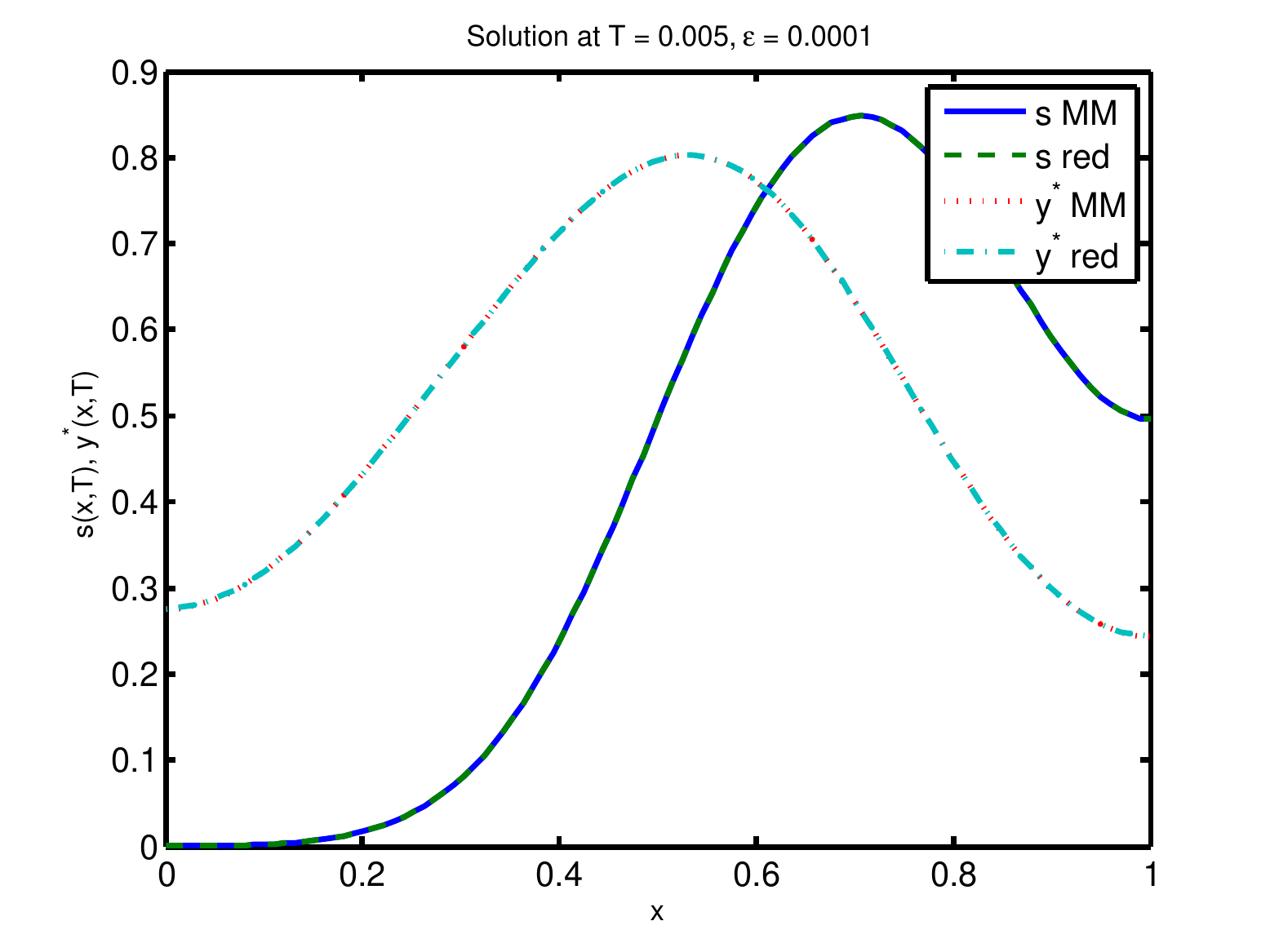}
\caption{Solutions $s$ and $y^*$ at time $T=0.005$. Comparison between Michaelis-Menten and reduced system for $\varepsilon=0.0001$.}
\label{fig:sol00001}
\end{figure}

\begin{figure}
\centering\includegraphics[width=0.8\linewidth]{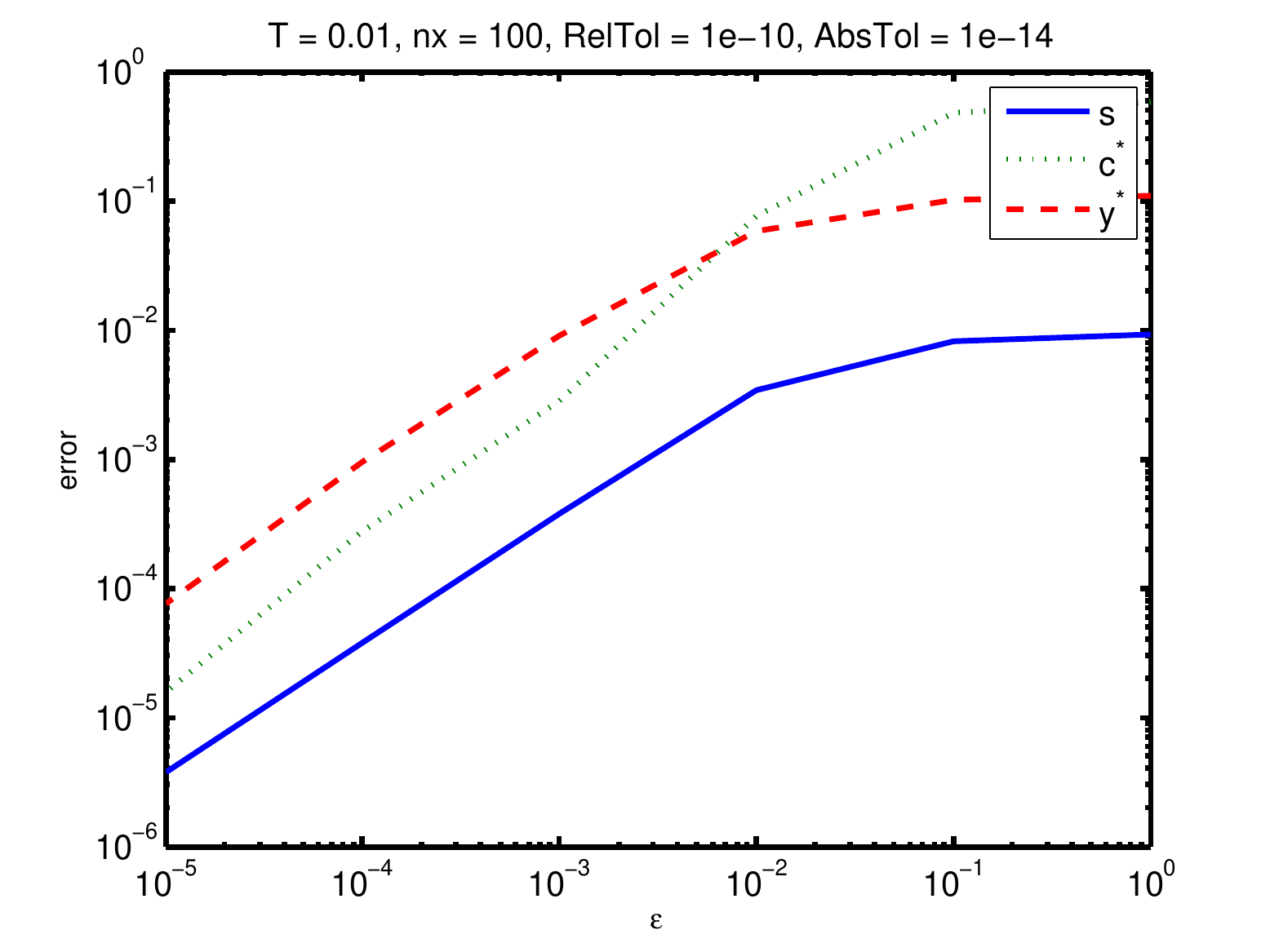}
\caption{Convergence of the full solution to the reduced solution as $\varepsilon\to 0$. Error measured in the $L^\infty$ norm.}
\label{fig:convergence}
\end{figure}

\begin{figure}
\centering\includegraphics[width=0.8\linewidth]{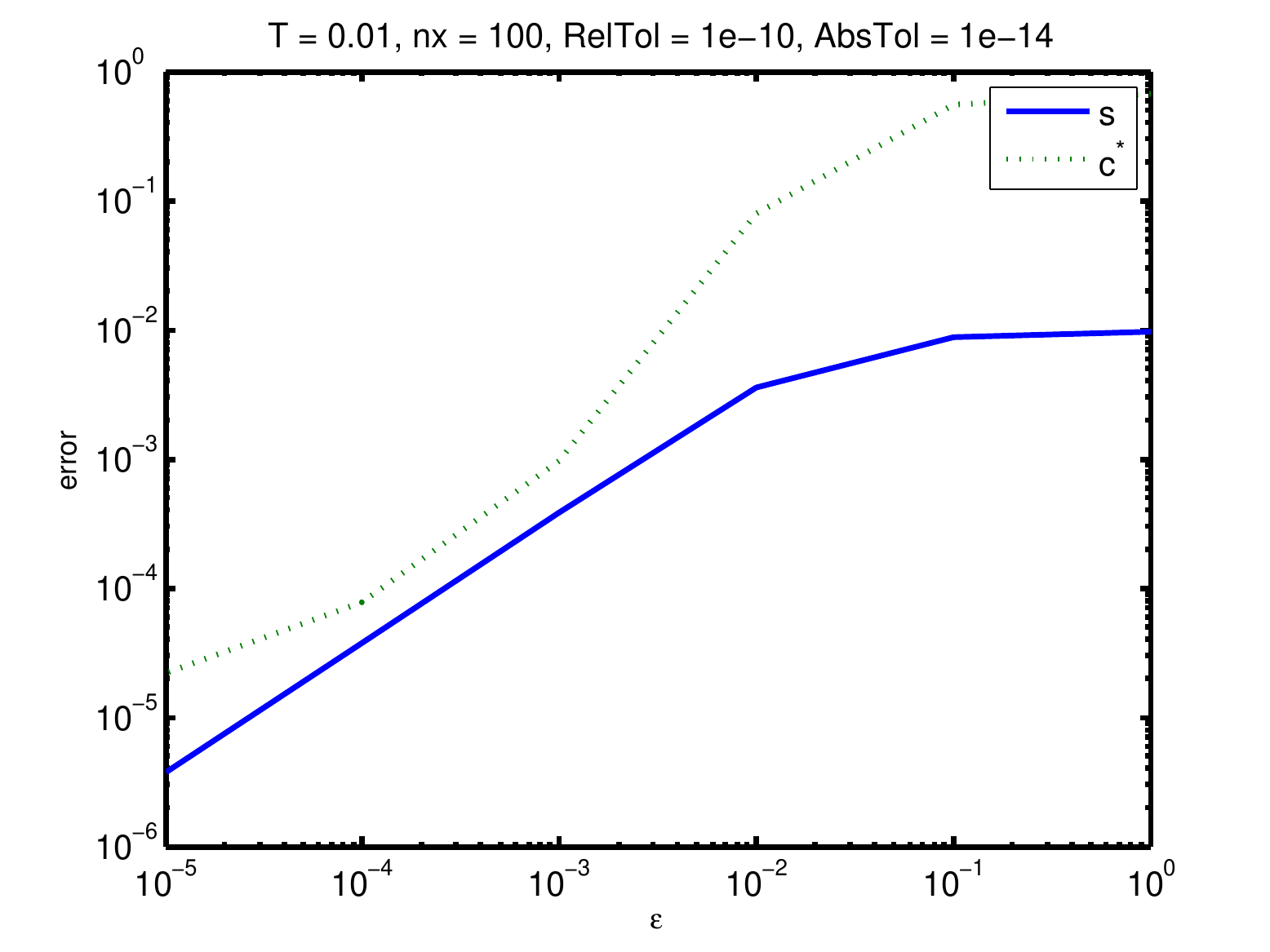}
\caption{Convergence of the full solution to the reduced solution for equal diffusion constants as $\varepsilon\to 0$. Error measured in the $L^\infty$ norm.}
\label{fig:convergenceglsc}
\end{figure}

\begin{figure}
\centering\includegraphics[width=0.8\linewidth]{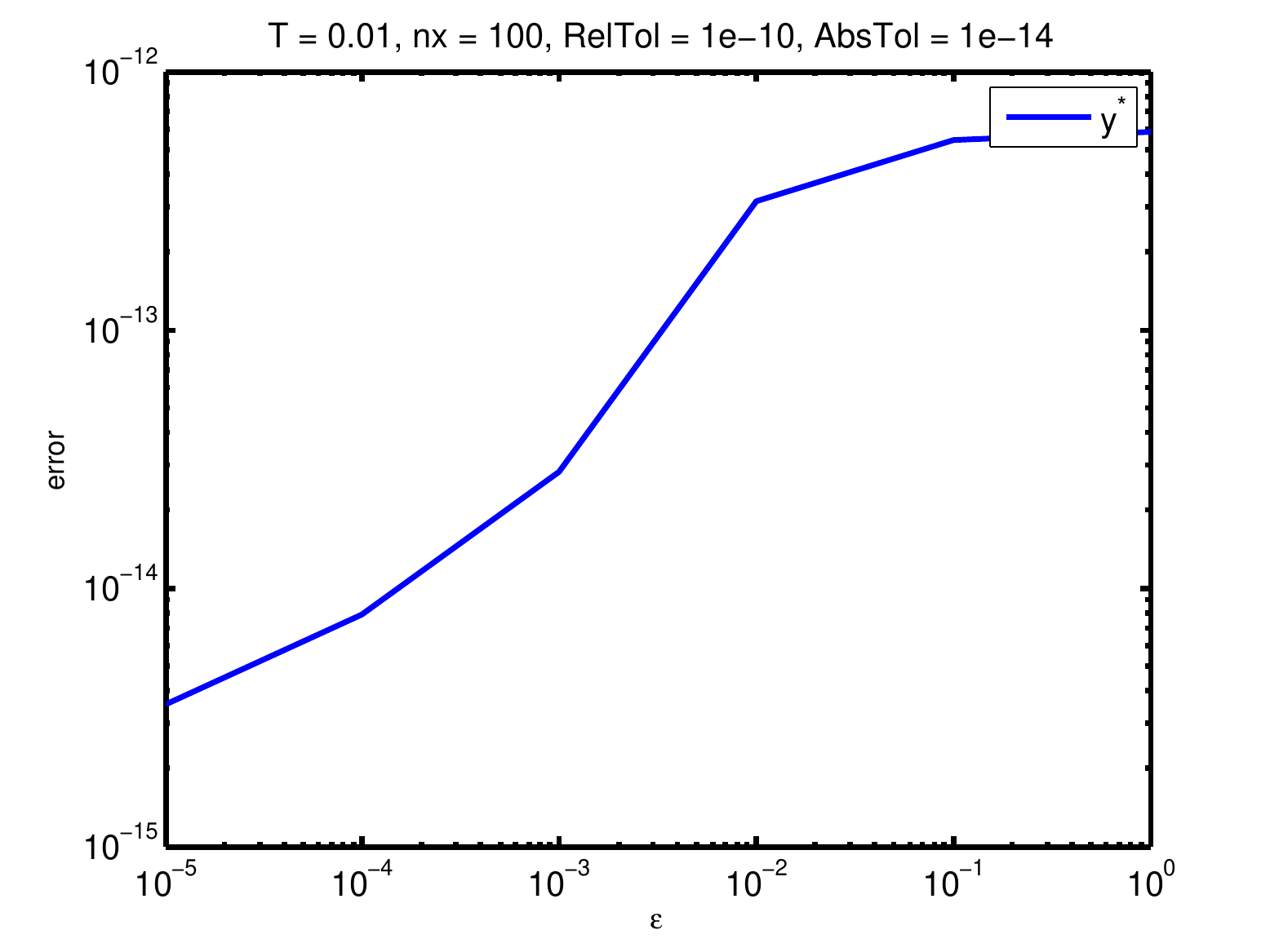}
\caption{Convergence of the full solution to the reduced solution for equal diffusion constants as $\varepsilon\to 0$. Error measured in the $L^\infty$ norm.}
\label{fig:convergencegly}
\end{figure}

%
%
%

\section{Concluding remarks}\label{conclrem}
 
\begin{itemize}
 \item As already noted, we do not discuss convergence results. But it is easy to see that the uniform bound for $y^*$ implies that $c$ converges uniformly to 0 as $\varepsilon\to0$. Moreover, up to taking a subsequence, $c^*:=\varepsilon^{-1}c$ and $y^*$ converge $\text{weakly}^*$ in $C^{0}$ and weakly in $L^{p}$ for all $1<p<\infty$. This may be a starting point for a convergence proof. 
 \item As already mentioned, this above reductions can be obtained only after a degenerate scaling of certain variables; then a Tikhonov-Fenichel reduction is applicable. (The corresponding scaling by Heineken et al. \cite{hta} in the ODE case  is convenient, but not necessary.) This may also be the underlying reason why the approach by Yannacopoulos et al. \cite{Yannacopoulos} was not directly applicable to the given setting. The scaled quantities $y^*$ and $c^*$ can be seen as first order approximations of $y$ and $s$ of the solution of \eqref{mm1diffy}--\eqref{mm3diffy} (with respect to the assumptions regarding slow diffusion and small total initial enzyme concentration) where the zero order terms are equal to zero. The effect of degenerate scalings in general  is investigated in a forthcoming paper \cite{lw2}.
 \item A reduction similar to the one above was already given in the dissertation \cite{Lax}, but it was based on writing the system in $(s,e,c,p)$ and scaling both $e=\varepsilon e^*$ and $c=\varepsilon c^*$. The reduced system is equivalent to the reduced system given here. We chose to change the variables to $(s,c,y,p)$ in order to emphasize the resemblance to the non-diffusive case which is otherwise lost. 
 \item It is also possible to only scale $y$ instead of both $c$ and $y$ (and still obtain that $c$ will be of order $\varepsilon$). But there are some disadvantages: The computation of the reduced system gets more involved as the results of \cite{laxgoeke} cannot be used directly. Moreover, we only get a zero order approximation to the slow manifold, given by $c=0$.
 \item Different QSS assumptions are also being discussed in the literature. Various choices of small rate constants can be found in \cite{laxgoeke,Lax}; for example the assumptions of slow product formation ($k_2=\varepsilon k_2^*$) and slow diffusion ($\delta_z=\varepsilon \delta_z^*\text{ for }z=s,e,c,p$) as well as only slow product formation are discussed.\\
 Moreover, the assumption of slow complex formation ($k_1=\varepsilon k_1^*$ and $k_{-2}=\varepsilon k_{-2}^*$) and slow diffusion can be discussed by employing the method developed in \cite{laxgoeke}. A reduced system is given by
  \begin{align*}
   \partial_{\tau} s&= \delta_s \Delta s-\frac{k_1k_2}{k_{-1}+k_2}se+\frac{k_{-1}k_{-2}}{k_{-1}+k_2} ep\\
   \partial_{\tau} e&= \delta_e\Delta e \\
   \partial_{\tau} p&= \delta_p\Delta p+\frac{k_1k_2}{k_{-1}+k_2}se-\frac{k_{-1}k_{-2}}{k_{-1}+k_2} ep
  \end{align*}
 on the slow manifold defined by $c=0$. This corresponds to the convergence results of Bothe and Pierre \cite{BothePierre1} and Bisi et al. \cite{bisi} for a related system which is defined by the reaction $A_1+A_2 \rightleftharpoons A_3 \rightleftharpoons A_4+A_5$. (Note that the latter reaction is easier to analyze, due to the structure of the conservation laws; see Elia\v{s} \cite{elias}). In all cases, the numerical results are in good agreement with the reduction.
 \item By analogous methods one can derive a reduction given the assumption of small total initial enzyme concentration, but with fast diffusive terms. Scaling again $y=\varepsilon y^*$ and $c=\varepsilon c^*$ and using results of \cite{Lax} one obtains the classical reduction: the fast diffusion yields a homogenization of the concentrations, enzyme and complex are in QSS and the reduced dynamics of the substrate are described by \eqref{red} (again, the reduction is in good agreement with numerical results). We omit details here.
\end{itemize}

\section{Acknowledgement}
The second-named author was supported by the DFG Research Training Group ``Experimental and Constructive Algebra'' (GRK 1632). 

\appendix

\section{Appendix: Computations and proofs}
Here we collect, for the reader's convenience, some known results and facts, and we present the proofs of some of the main results in Section 3 in detail, sketching the remaining ones.
\subsection{Tikhonov-Fenichel reduction of ODEs}
We first recall some results from \cite{gw2}. Consider a polynomial or rational system
\begin{equation}\label{startsystem}
\dot x= h(x,\varepsilon)=h^{(0)}(x)+\varepsilon h^{(1)}(x)+\cdots,\quad x\in\mathbb R^m,\quad \varepsilon>0,
\end{equation}
and in addition assume that there exists $x_0$ in the zero set $\mathcal V(h^{(0)})$ such that $\rank Dh^{(0)}(x)=r<m$ for all $x\in \mathbb R^m$ near $x_0$. We denote by $\V$ the irreducible component of $\mathcal V(h^{(0)})$ which contains $x_0$. By the implicit function theorem, there is a (Zariski-open) neighborhood $U$ of $x_0$ such that $\V\cap U$ is a $(m-r)$-dimensional submanifold. 
\begin{proposition}\label{agreduce} (See \cite{gw2}, Theorem 1.) Assume furthermore that
  \[
   \mathbb R^m=\ker Dh^{(0)}(x) \oplus \im Dh^{(0)}(x)
  \]
for all $x\in \V\cap U$, and that there exists $\nu>0$ such that all nonzero eigenvalues of $ Dh^{(0)}(x)$, $x\in \V$, have real part $\leq -\nu$.
 Then the following hold.
\begin{enumerate}[(a)]
\item There exist rational maps  \[P\colon \mathbb R^m\to \mathbb R^{m\times r}\quad \text{and} \quad \mu\colon\mathbb R^m\to \mathbb R^r\]  which are regular in $x_0$, with $\rank P(x_0)=\rank D\mu(x_0)=r$, such that the identity
    \[
     h^{(0)}(x)=P(x)\mu(x)
    \]
of rational functions holds. Moreover, the zero set $Y$ of $\mu$ satisfies $Y\cap \widetilde U=\V\cap \widetilde U$ in some Zariski-open neighborhood $\widetilde U$ of $x_0$.
\item The system
    \begin{equation}\label{Grenzsystem}
     x^\prime=q(x):=Q(x)\cdot h^{(1)}(x)
    \end{equation}
  with \[Q(x):=I_m-P(x)(D\mu(x)P(x))^{-1}D\mu(x),\] 
(in slow time $\tau=\varepsilon t$) is defined in $x_0$, and the manifold $Y\cap\widetilde U$ is an invariant set of \eqref{Grenzsystem}. 
\item There exists $T>0$ and a neighborhood $U^*\subset U$ of $Y$ such that, for any $\tau_0$ with $0<\tau_0<T$, solutions of 
\[
x^\prime = \varepsilon^{-1}h(x,\varepsilon)
\]
starting in $U^*$ converge uniformly on $[\tau_0,T]$ to solutions of the reduced system \eqref{Grenzsystem} on $Y$ as $\varepsilon\to 0$.
\end{enumerate}
\end{proposition}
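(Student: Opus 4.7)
The plan is to prove the three parts in order, with (a) a local algebraic factorization, (b) a linear-algebra verification on the slow manifold, and (c) the analytic heart, which I would reduce to a classical Tikhonov/Fenichel convergence theorem after straightening $\V$. For part (a), I would use that $\V$ is an irreducible component of $\mathcal V(h^{(0)})$ of codimension $r$ in $\mathbb R^m$ and that $\rank Dh^{(0)}(x_0)=r$. Hence there exist $r$ components of $h^{(0)}$, say $h^{(0)}_{i_1},\dots,h^{(0)}_{i_r}$, whose differentials are linearly independent at $x_0$; I would take these as $\mu_1,\dots,\mu_r$, so that $D\mu(x_0)$ has rank $r$ and $\{\mu=0\}$ coincides with $\V$ near $x_0$. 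The remaining components of $h^{(0)}$ vanish on $\V$, hence lie in the ideal generated by the $\mu_j$ after possibly inverting a polynomial nonvanishing at $x_0$; collecting the coefficients yields a rational matrix $P(x)$ with $h^{(0)}(x)=P(x)\mu(x)$, and the product rule at $\mu(x_0)=0$ gives $Dh^{(0)}(x_0)=P(x_0)D\mu(x_0)$, which forces $\rank P(x_0)=r$.

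For part (b), I would first establish invertibility of $D\mu(x)P(x)$ on $\V\cap\widetilde U$. On $\V$ the identity $Dh^{(0)}(x)=P(x)D\mu(x)$ gives $\im Dh^{(0)}(x)\subseteq \im P(x)$ and $\ker D\mu(x)\subseteq \ker Dh^{(0)}(x)$. The direct sum hypothesis together with the dimension counts $\dim\im P(x)=r$, $\dim\ker D\mu(x)=m-r$ forces $\im P(x)\cap\ker D\mu(x)=\{0\}$, which is equivalent to invertibility of $D\mu(x)P(x)\in\mathbb R^{r\times r}$. Thus $Q(x)$ is well defined near $x_0$ and is the linear projection onto $\ker D\mu(x)$ along $\im P(x)$. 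Invariance of $Y\cap\widetilde U$ under \eqref{Grenzsystem} then follows immediately from $D\mu(x)Q(x)\equiv 0$, since $Q(x)h^{(1)}(x)\in\ker D\mu(x)=T_x\V$ for every $x\in\V$.

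For part (c), I would straighten $\V$ using $\mu$ as partial coordinates: on a neighborhood of $x_0$ introduce coordinates $(u,v)\in\mathbb R^{m-r}\times\mathbb R^r$ with $v=\mu(x)$ and $u$ transverse, so that $\V=\{v=0\}$ locally. In these coordinates the system becomes Tikhonov-standard; the fast block reads $\dot v = D\mu(x)P(x)\,v + O(\varepsilon)$, whose linearization along $v=0$ has the same nonzero spectrum as $Dh^{(0)}(x)=P(x)D\mu(x)$ and therefore consists of eigenvalues with real part at most $-\nu$. This uniform asymptotic stability is exactly the hypothesis of Tikhonov's theorem, which in slow time $\tau=\varepsilon t$ yields the existence of $T>0$ and a neighborhood $U^*$ of $\V$ such that solutions starting in $U^*$ converge uniformly on $[\tau_0,T]$ to solutions of the slow reduced system on $\V$. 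A short computation shows that this Tikhonov-reduced vector field coincides with $Qh^{(1)}$, since both amount to projecting $h^{(1)}$ onto $\ker D\mu(x)$ along $\im P(x)$.

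The main obstacle is the identification step in (c): verifying that the Tikhonov reduction carried out in straightened coordinates produces precisely the intrinsic projected field $Qh^{(1)}$, rather than a coordinate-dependent expression. Parts (a)--(b) reduce, once the right factorization is in hand, to bookkeeping and linear algebra; the only delicacy is controlling regularity of the rational maps at $x_0$. The analytic content in (c) is standard after the spectral hypothesis has been translated into the straightened coordinates, but the intrinsic expression of the reduced flow requires the short calculation using $D\mu(x)Q(x)=0$ together with $Dh^{(0)}(x)|_{\V}=P(x)D\mu(x)$.
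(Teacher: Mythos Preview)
The paper does not prove this proposition at all: it is explicitly stated as a recall of Theorem~1 from \cite{gw2}, and no argument is given. So there is no ``paper's own proof'' to compare against; your outline is an independent proof sketch of a cited result.

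That said, your sketch is essentially sound and follows the natural route. Two small comments. In part~(a), choosing $\mu$ as $r$ suitably selected components of $h^{(0)}$ is fine, but the step ``the remaining components lie in the ideal generated by the $\mu_j$ after inverting a polynomial nonvanishing at $x_0$'' is doing real work: you are using that $\mu_1,\dots,\mu_r$ form a regular sequence generating the ideal of $\V$ in the local ring at $x_0$, so any polynomial vanishing on $\V$ lies in that ideal locally. This is correct but deserves an explicit reference or a line of justification in a full proof. In part~(c), your straightening $(u,v)$ with $v=\mu(x)$ does not by itself put the system in Tikhonov standard form: the $u$-equation still carries a term $\varepsilon^{-1}(Du)\,P(x)\,v$ in slow time, which is $O(\varepsilon^{-1}v)$ rather than $O(1)$. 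This is the usual issue, and it is resolved either by choosing $u$ adapted to the fast foliation (so that $(Du)P\equiv 0$ on $\V$) or by invoking Fenichel's geometric theorem directly, where no standard form is needed and the reduced flow on the critical manifold is automatically the projection of $h^{(1)}$ onto $T_x\V=\ker D\mu(x)$ along the fast fibers $\im P(x)$, i.e.\ $Qh^{(1)}$. Your ``short computation'' is precisely this identification; it is short, but it is the content of the result, so in a full write-up it should be carried out rather than asserted.
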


We call \eqref{Grenzsystem} the Tikhonov-Fenichel reduction of \eqref{startsystem}. In order to apply this reduction, one also needs to know the appropriate initial value on $Y$. This was basically settled by Fenichel \cite{fenichel} Theorem 9.1 and was discussed in detail for the given particular setting in \cite{gw2} (see also the references given there). We briefly summarize: By \cite{gw2} Proposition 2, the system $\dot x=h^{(0)}(x)$ admits $m-r$ first integrals in a neighborhood of $x_0$. Moreover, the intersection of a common level set of the first integrals with $\V(h^{(0)})$ consists (locally) of a single point. Thus, to project the initial values of system \eqref{startsystem} to \eqref{Grenzsystem}, choose the corresponding intersection point.\\ 
In general it will not be possible to determine the first integrals explicitly (see \cite{gw2} Remark 6 for details on an approximation by Taylor series), but for Michaelis-Menten in this particular setting the first integrals are easily determined and the projected initial values (for the relevant variables $s$, $y^*$ and $p$) are identical to the original ones.

\subsection{Discretization}
Here we briefly review the spatial discretization procedure, and some properties of the discretized system; see \cite{laxgoeke} and \cite{Lax}.
For the sake of simplicity we assume that $\Omega=(0,L)$ is a real interval. (When $\Omega=(0,L_1)\times(0,L_2)\times\cdots\times(0,L_n)\subseteq\mathbb R^n$ holds, a similar discretization with obvious adjustments can be carried out. Note that the special choice of $\Omega$ is relevant only for the derivation of the reduced PDE system.)\\
Let $L=N\rho$, where $\rho$ is the mesh size. We subdivide $\Omega$ in compartments 
$
 \Omega_{\alpha}=\left((\alpha-1)\rho,\alpha\rho\right),
$
with $1\leq \alpha\leq N$, and we identify $\alpha$ with the compartment $\Omega_{\alpha}$. Define $z_{\alpha}$ as the concentration of species $Z$ at the center $x_\alpha$ of compartment $\alpha$ (with $Z=S,E,C,P$), and let
\[
\widehat z:=(z_\alpha)_{1\leq \alpha\leq N}\in \mathbb R^N.
\]
We choose a central difference discretization of the Laplacian, i.e.
  \begin{align*}
   \D_{\alpha}\widehat z=\frac{z_{\alpha-1}-2z_{\alpha}+z_{\alpha+1}}{\rho^2},\quad 1\leq \alpha\leq N.
  \end{align*}
To incorporate the Neumann boundary conditions, we set $z_{0}=z_{1}$ and $z_{N+1}=z_{N}$.\\ 
The discretization of \eqref{mm1diff}--\eqref{mm4diff} is given by
  \begin{align}
   \dot s_\alpha&= \delta_s {\cal D}_\alpha\widehat s-k_1s_\alpha e_\alpha+k_{-1} c_\alpha, &1\leq \alpha\leq N \label{mm1disc}\\
   \dot e_\alpha&= \delta_e {\cal D}_\alpha\widehat e-k_1s_\alpha e_\alpha+(k_{-1} +k_2)c_\alpha-k_{-2}e_\alpha p_\alpha, &1\leq \alpha\leq N  \label{mm2disc} \\
    \dot c_\alpha&= \delta_c {\cal D}_\alpha\widehat c+k_1s_\alpha e_\alpha-(k_{-1} +k_2)c_\alpha+k_{-2}e_\alpha p_\alpha, &1\leq \alpha\leq N  \label{mm3disc} \\
   \dot p_\alpha&= \delta_p {\cal D}_\alpha\widehat p +k_2c_\alpha-k_{-2}e_\alpha p_\alpha, &1\leq \alpha\leq N  \label{mm4disc}
  \end{align}
with initial values
\begin{align*}
&s_{\alpha}(0)=s_{\alpha, 0}:=s_0(x_{\alpha}),\quad e_{\alpha}(0)=e_{\alpha, 0}:=e_0(x_{\alpha}),\\ &c_{\alpha}(0)=c_{\alpha, 0}:=c_0(x_{\alpha}),\quad p_{\alpha}(0)=p_{\alpha, 0}:=p_0(x_{\alpha}).
\end{align*}
We will also make use of the total enzyme concentrations
\[
\widehat y:=\widehat e+\widehat c
\]
and the discretized diffusion matrix $\cal D$, which in dimension one has the form
\[
 \D:=\left(\D_{\alpha}\right)_{1\leq \alpha\leq N}=\frac1{\rho^2}\begin{pmatrix}
          -1 & 1 \\
    1 & -2 & 1\\
      & \ddots & \ddots & \ddots\\
    & & \ddots & \ddots & \ddots\\
     & & &	1 & -2 & 1\\
     & &   &       & 1 & -1
     \end{pmatrix}.
\]
(In spatial dimension $n>1$, the discretized diffusion matrix $\D$ for $(0,L_1)\times\cdots\times(0,L_n)$ is of different form but in any case it is a so-called $W$-matrix, i.e. the sum of all rows is equal to zero and all off-diagonal elements are nonnegative.)\\
Our fundamental assumptions for the discretized system correspond to those in Subsection \ref{mainresultirrev}; they are as follows:
\begin{enumerate}[(i)]
\item Diffusion is slow, thus one may scale
\begin{equation}\label{diffscale}
\delta_s=\varepsilon\delta_s^*,\quad \delta_c=\varepsilon\delta_c^*,\quad,\delta_e=\varepsilon\delta_e^*, \quad \delta_p=\varepsilon\delta_p^*
\end{equation}
with a small parameter $\varepsilon >0$.
\item The initial concentrations of enzyme and complex are small of order $\varepsilon$ in every compartment; hence there is a constant $E>0$ such that $e_{\alpha, 0}\leq E\cdot\varepsilon$ and $c_{\alpha, 0}\leq E\cdot\varepsilon$ for all $\alpha$.
\end{enumerate}
Requirement (ii) is less restrictive than the corresponding one for the PDE system, since the conditions refer only to $t=0$. Actually, for the ODE system after discretization the property for $t>0$ follows automatically, and we will prove this and some other basic properties for the discretized system next.

\begin{lemma}\label{elemlem}\begin{enumerate}[(a)]
\item For every $x\in\mathbb R^N$ one has $\sum_\alpha {\cal D}_\alpha(x)=0$.
\item Solutions with nonnegative initial values are nonnegative for all $t\geq 0$.
\item One has 
\[
\sum_\alpha(s_\alpha+e_\alpha+2c_\alpha+p_\alpha)=\sum_\alpha(s_{\alpha,0}+e_{\alpha,0}+2c_{\alpha,0}+p_{\alpha,0})
\]
for every $t\geq 0$; in particular every component of the solution is bounded.
\item There is a constant $C>0$ such that $y_\alpha=e_\alpha+c_\alpha\leq C\cdot \varepsilon$ for all $t\geq 0$, $1\leq \alpha\leq N$.
\end{enumerate}
\end{lemma}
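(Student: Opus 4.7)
The plan is to dispatch the four items in order, using only the explicit form of the discretization and elementary invariance/quasi-positivity arguments; none of the parts requires any PDE machinery.

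For (a), I would simply read off the matrix $\mathcal D$: the interior rows are $(\dots,1,-2,1,\dots)/\rho^2$ summing to $0$, and the two boundary rows are $(-1,1)/\rho^2$ and $(1,-1)/\rho^2$, again summing to $0$. (In higher spatial dimensions this is exactly the $W$-matrix property already stated in the text.) Consequently $\sum_\alpha \mathcal D_\alpha(x)=0$ for every $x\in\R^N$.

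For (b), I would apply the standard quasi-positivity (tangent-cone) criterion for ordinary differential equations: the nonnegative orthant is positively invariant provided that on each hyperplane $\{z_\alpha=0\}$ the vector field points inward. If $z_\alpha=0$ and all other entries are nonnegative, the discrete Laplacian reduces to a nonnegative combination of the neighbours (the Neumann convention $z_0=z_1$, $z_{N+1}=z_N$ makes even the boundary rows nonnegative), and the reaction parts at $z_\alpha=0$ evaluate to $k_{-1}c_\alpha$, $(k_{-1}+k_2)c_\alpha$, $k_1 s_\alpha e_\alpha+k_{-2}e_\alpha p_\alpha$, and $k_2 c_\alpha$ for $z=s,e,c,p$ respectively, all nonnegative.

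For (c), I would sum \eqref{mm1disc}--\eqref{mm4disc} over $\alpha$ with coefficients $1,1,2,1$. By (a) every diffusion contribution drops out, and the coefficients of the monomials $k_1 s_\alpha e_\alpha$, $k_{-1}c_\alpha$, $k_2 c_\alpha$, $k_{-2}e_\alpha p_\alpha$ add up to $-1-1+2=0$, $1+1-2=0$, $1-2+1=0$ and $-1+2-1=0$ respectively (this is just the stoichiometric balance). Hence $\sum_\alpha(s_\alpha+e_\alpha+2c_\alpha+p_\alpha)$ is an exact first integral, and combined with (b) every individual $s_\alpha,e_\alpha,c_\alpha,p_\alpha$ is bounded by this fixed total.

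For (d), add \eqref{mm2disc} and \eqref{mm3disc}: the reaction part cancels exactly, leaving $\dot y_\alpha=\delta_e\mathcal D_\alpha\widehat e+\delta_c\mathcal D_\alpha\widehat c$. Summing on $\alpha$ and invoking (a), $\sum_\alpha y_\alpha$ is conserved, hence equal to $\sum_\alpha(e_{\alpha,0}+c_{\alpha,0})\leq 2NE\varepsilon$ by assumption (ii). The componentwise nonnegativity from (b) then upgrades this $\ell^1$-bound to an $\ell^\infty$-bound, namely $y_\alpha\leq \sum_\beta y_\beta\leq C\varepsilon$ with $C:=2NE$. The only conceptual point worth highlighting — and the step I would flag as the main obstacle in the analogous PDE situation discussed in Remark \ref{remark1} — is that here we do \emph{not} need $\delta_e=\delta_c$: after discretization the global conservation together with pointwise nonnegativity trivially yields a uniform bound on every $y_\alpha$, at the modest cost of a constant depending on the number of compartments.
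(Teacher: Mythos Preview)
Your proof is correct and follows exactly the same line as the paper's own argument, only spelled out in more detail: row (equivalently, by symmetry, column) sums of $\mathcal D$ vanish for (a); quasi-positivity from nonnegative off-diagonal entries plus nonnegative reaction terms at the faces for (b); the stoichiometric combination $1{,}1{,}2{,}1$ together with (a) for (c); and conservation of $\sum_\alpha y_\alpha$ combined with nonnegativity for (d), yielding $C=2NE$. Your closing remark about the $N$-dependence of the constant, and why the analogous step fails at the PDE level without $\delta_e=\delta_c$, is a nice addition that the paper's proof does not make explicit.
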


\begin{proof} Part (a) reflects the property of the discretized diffusion matrix $\cal D$ that the sum of its rows equals zero. Part (b) is a consequence of the fact that off-diagonal elements of $\cal D$ are nonnegative, hence the rate of change for every variable $z_\alpha$ is nonnegative whenever $z_\alpha =0$. Part (c) follows from (a) and \eqref{mm1disc}  -- \eqref{mm4disc}, and part (d) follows from 
\[
\sum_\alpha(e_\alpha+c_\alpha)=\sum_\alpha(e_{\alpha,0}+c_{\alpha,0})\leq 2NE\cdot\varepsilon.
\]
\end{proof}
In view of part (d) of the Lemma, the scaling
\begin{equation}\label{varscale}
y_\alpha=\varepsilon y_\alpha^* \text{ and } c_\alpha=\varepsilon c_\alpha^*,\quad 1\leq \alpha\leq N
\end{equation}
is consistent; i.e., all $y_\alpha^*$ and $c_\alpha^*$ remain bounded in $\varepsilon$ for $t\geq 0$ whenever assumption (ii) holds.


\subsection{Reduction of the discretized irreversible system}
For the irreversible system (thus $k_{-2}=0$) one has
  \begin{align}
   \dot s_\alpha&= \delta_s {\cal D}_\alpha\widehat s-k_1s_\alpha e_\alpha+k_{-1} c_\alpha, &1\leq \alpha\leq N \label{mm1irrdisc}\\
   \dot e_\alpha&= \delta_e {\cal D}_\alpha\widehat e-k_1s_\alpha e_\alpha+(k_{-1} +k_2)c_\alpha, &1\leq \alpha\leq N  \label{mm2irrdisc} \\
    \dot c_\alpha&= \delta_c {\cal D}_\alpha\widehat c+k_1s_\alpha e_\alpha-(k_{-1} +k_2)c_\alpha, &1\leq \alpha\leq N  \label{mm3irrdisc}
  \end{align}
since the equations for $\widehat p$ may be omitted. Rewriting the system with 
$\widehat y=\widehat e+\widehat c$, 
we get
  \begin{align}
   \dot s_\alpha&= \delta_s {\cal D}_\alpha\widehat s-k_1s_\alpha (y_\alpha-c_\alpha)+k_{-1} c_\alpha, &1\leq \alpha\leq N \label{mm1irrdiscn}\\
   \dot c_\alpha&= \delta_c {\cal D}_\alpha\widehat c+k_1s_\alpha (y_\alpha-c_\alpha)-(k_{-1} +k_2)c_\alpha, &1\leq \alpha\leq N  \label{mm2irrdiscn}\\
 \dot y_\alpha&= \delta_e {\cal D}_\alpha\widehat y + (\delta_c-\delta_e){\cal D}_\alpha\widehat c, &1\leq \alpha\leq N.  \label{mm3irrdiscn} 
  \end{align}
With the scalings \eqref{diffscale} and \eqref{varscale},
system \eqref{mm1irrdiscn}-- \eqref{mm3irrdiscn}  becomes
 \begin{align}
   \dot s_\alpha&= \varepsilon\delta_s^* {\cal D}_\alpha\widehat s+\varepsilon(k_1s_\alpha +k_{-1}) c^*_\alpha-\varepsilon k_1s_\alpha y_\alpha^*, &1\leq \alpha\leq N \label{mm1irrdisceps}\\
   \dot c^*_\alpha&= \varepsilon\delta_c^* {\cal D}_\alpha\widehat c^*-(k_1s_\alpha +k_{-1}+k_2) c^*_\alpha+ k_1s_\alpha y_\alpha^*, &1\leq \alpha\leq N  \label{mm2irrdisceps}\\
 \dot y^*_\alpha&= \varepsilon\delta_e^* {\cal D}_\alpha\widehat y^* +\varepsilon \delta{\cal D}_\alpha\widehat c^*, &1\leq \alpha\leq N.  \label{mm3irrdisceps} 
  \end{align}
where $\delta=\delta_c^*-\delta_e^*$ as in \eqref{delteq}. The main result in Section \ref{mainresultirrev} is a direct consequence of the following Proposition; note that the reduced ODE system is the spatial discretization of \eqref{mm1diffred1} -- \eqref{mm3diffred1} resp.  \eqref{mm1diffred} -- \eqref{mm3diffred}.

\begin{proposition}\label{irrevdiscred}
\begin{enumerate}[(a)]
\item The Tikhonov-Fenichel reduction of system  \eqref{mm1irrdisceps}--\eqref{mm3irrdisceps} is given by
  \begin{align}
   s_\alpha^\prime&= \delta_s^* {\cal D}_\alpha\widehat s-\frac{k_1k_2y_\alpha^*s_\alpha}{k_1s_\alpha+k_{-1}+k_2}, &1\leq \alpha\leq N \label{mm1discred} \\
   {y_\alpha^*}^\prime&= \delta_e^*{\cal D}_\alpha\widehat  y^*+\delta {\cal D}_\alpha \left(\frac{k_1y_\beta^*s_\beta}{k_1s_\beta+k_{-1}+k_2}\right)_{1\leq \beta\leq N},  &1 \leq\alpha\leq N \label{mm3discred}
  \end{align}
on the asymptotic slow manifold determined by \[c^*_\alpha=\frac{k_1s_\alpha y_\alpha^*}{k_1s_\alpha +k_{-1}+k_2},\ 1\leq \alpha\leq N.\]
\item In the special case that the diffusion constants $\delta_e^*$ for enzyme and $\delta_c^*$ for complex are equal (or, more generally, whenever their difference is ${\cal O}(\varepsilon)$), we get the reduction
  \begin{align}
   s_\alpha^\prime&= \delta_s^* {\cal D}_\alpha\widehat s-\frac{k_1k_2y_\alpha^*s_\alpha}{k_1s_\alpha+k_{-1}+k_2}, &1\leq \alpha\leq N \label{mm1discred1} \\
   {y_\alpha^*}^\prime&= \delta_e^*{\cal D}_\alpha\widehat y^*,  &1 \leq\alpha\leq N. \label{mm3discred1}
  \end{align}
\item The corresponding initial values of the reduced system on the asymptotic slow manifold may be taken as
\[
\widetilde s_{\alpha,0}=s_{\alpha,0},\quad \widetilde y_{\alpha,0}^*=y_{\alpha,0}^*;\quad 1\leq \alpha\leq N.
\]
\end{enumerate}
\end{proposition}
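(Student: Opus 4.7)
My plan is to invoke Proposition~\ref{agreduce} for the scaled ODE system (\ref{mm1irrdisceps})--(\ref{mm3irrdisceps}), written on $\R^{3N}$ in the variables $x=(\widehat s,\widehat c^*,\widehat y^*)$ as $\dot x = h^{(0)}(x)+\varepsilon h^{(1)}(x)$. Reading off the $\varepsilon$-orders shows that $h^{(0)}$ has zero entries at all $s_\alpha$ and $y^*_\alpha$ positions and the Michaelis--Menten expression $-(k_1s_\alpha+k_{-1}+k_2)c^*_\alpha+k_1s_\alpha y^*_\alpha$ in each $c^*_\alpha$ slot, while $h^{(1)}$ collects the diffusive terms together with the remaining slow reaction contributions. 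In particular $\V(h^{(0)})$ coincides with the slow manifold claimed in (a), which has dimension $2N$, so $r=N$.

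The Jacobian $Dh^{(0)}$ has its only non-zero rows in the $N$ middle positions, forming a block $[A\mid B\mid C]$ with $A=\diag(k_1(y^*_\alpha-c^*_\alpha))$, $B=-\diag(k_1s_\alpha+k_{-1}+k_2)$ and $C=\diag(k_1 s_\alpha)$. Since $B$ is diagonal and strictly negative for $s_\alpha\geq 0$, we obtain $\rank Dh^{(0)}=N=r$, the image $\im Dh^{(0)}$ is the $c^*$-coordinate subspace, and the splitting $\ker Dh^{(0)}\oplus\im Dh^{(0)}=\R^{3N}$ follows at once. The characteristic polynomial factors as $\lambda^{2N}\det(\lambda I-B)$, so the non-zero spectrum of $Dh^{(0)}$ is precisely the diagonal of $B$ and is uniformly bounded above by $-(k_{-1}+k_2)$, which supplies the required gap $\nu$.

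For the reduction I set $\mu_\alpha(x):=(k_1s_\alpha+k_{-1}+k_2)c^*_\alpha-k_1s_\alpha y^*_\alpha$ for $1\leq\alpha\leq N$, so that $h^{(0)}=P\mu$ for the constant $3N\times N$ matrix $P$ whose middle block equals $-I_N$ and whose other blocks vanish. Then $D\mu\cdot P=-\diag(k_1s_\alpha+k_{-1}+k_2)$ is invertible, and a short block computation shows that $Q=I-P(D\mu\cdot P)^{-1}D\mu$ has first and third block rows equal to $[I_N,0,0]$ and $[0,0,I_N]$, respectively. Therefore $Qh^{(1)}$ preserves the $s_\alpha$ and $y^*_\alpha$ entries of $h^{(1)}$ unchanged; substituting the slow-manifold expression $c^*_\alpha=k_1s_\alpha y^*_\alpha/(k_1s_\alpha+k_{-1}+k_2)$ reduces $(k_1s_\alpha+k_{-1})c^*_\alpha-k_1s_\alpha y^*_\alpha$ to $-k_1k_2s_\alpha y^*_\alpha/(k_1s_\alpha+k_{-1}+k_2)$, which is the reaction term of (\ref{mm1discred}), and transforms $\delta_e^*\D_\alpha\widehat y^*+\delta\D_\alpha\widehat c^*$ into the right-hand side of (\ref{mm3discred}). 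This proves (a).

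Part (b) then follows because the hypothesis $\delta=\O(\varepsilon)$ shifts the $\delta\D_\alpha\widehat c^*$ contribution into a higher-order term in $\varepsilon$, which the reduction discards. For (c) observe that $\dot x=h^{(0)}(x)$ only acts on the $c^*_\alpha$ coordinates, so each $s_\alpha$ and each $y^*_\alpha$ is a first integral of the fast flow; the projection along the fast fibres to $\V$ described after Proposition~\ref{agreduce} therefore leaves $s_{\alpha,0}$ and $y^*_{\alpha,0}$ unchanged and reconstructs $c^*_\alpha$ from the manifold equation. Every step is algebraic, and the main obstacle is really only the block computation of $Q$: it is the resulting block-identity form on the $s$ and $y^*$ components that guarantees the reduced ODE retains the structure of a spatial discretization of a reaction-diffusion system, rather than some less structured object.
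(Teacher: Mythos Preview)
Your proof is correct and follows essentially the same route as the paper: you apply Proposition~\ref{agreduce} with the same factorization $h^{(0)}=P\mu$ (up to an inessential sign convention in $\mu$ and $P$), compute the projector $Q$ block-wise, and read off the reduced system after substituting the slow-manifold expression for $c^*_\alpha$; parts (b) and (c) are handled identically. If anything, you are slightly more explicit than the paper in verifying the rank, splitting, and spectral-gap hypotheses of Proposition~\ref{agreduce}.
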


\begin{proof} We first show part (a). In the terminology of Proposition \ref{agreduce} we have for \eqref{mm1irrdisceps}--\eqref{mm3irrdisceps}:
\[
h^{(0)}=P\cdot \mu
\] 
with
\[
\mu=\big(-(k_1s_\alpha +k_{-1}+k_2) c^*_\alpha+ k_1s_\alpha y_\alpha^*\big)_{1\leq \alpha\leq N}  
\]
and
\[
 P=\begin{pmatrix}0\\ I_N\\ 0\end{pmatrix}.
\]
In particular, we have 
\begin{equation}\label{hilfeslowman}
 h^{(0)}=0\iff c^*_\alpha=\frac{k_1s_\alpha y_\alpha^*}{k_1s_\alpha +k_{-1}+k_2},\ 1\leq \alpha\leq N.
\end{equation}
Moreover
\[
h^{(1)}=\begin{pmatrix}\left(\delta_s^* {\cal D}_\alpha\widehat s+(k_1s_\alpha +k_{-1}+k_2) c^*- k_1s_\alpha y_\alpha^*\right)_{1\leq \alpha\leq N}\\
\delta_c^* {\cal D}\widehat c^*\\
\delta_e^* {\cal D}\widehat y^* + \delta{\cal D}\widehat c^*
\end{pmatrix}.
\]
Following the procedure in Proposition \ref{agreduce} we obtain
\[
D\mu =\begin{pmatrix}M_1& M_2& M_3\end{pmatrix}
\]
with
  \begin{align*}
   M_1&={\rm diag}\left(-k_1c_1^*+k_1y_1^*,\ldots,-k_1c_N^*+k_1y_N^*\right) \\
   M_2&={\rm diag}\left(-k_1s_1+k_{-1}+k_2,\ldots,-k_1s_N+k_{-1}+k_2\right) \\
   M_3&={\rm diag}\left(k_1s_1,\ldots,k_1s_N\right).
  \end{align*}
Thus we have
\[
 D\mu P=M_2.
\]

Since all eigenvalues of $M_2$ are negative, the eigenvalue condition from Proposition \ref{agreduce} is satisfied (see e.g. \cite{gw2}, Remark 4). Furthermore
\[
Q=I_{3N}-\begin{pmatrix}0\\ I_N \\ 0\end{pmatrix}M_2^{-1}\begin{pmatrix}M_1& M_2& M_3\end{pmatrix}
=\begin{pmatrix} I_N&0&0\\
                             -M_2^{-1}M_1&0&-M_2^{-1}M_1\\
                         0&0&I_N\end{pmatrix}
\]
from which (a) follows by a straightforward computation, using \eqref{hilfeslowman}. \\
As for part (b), repeating the above procedure with 
\[
\widetilde h^{(1)}=\begin{pmatrix}\left(\delta_s^* {\cal D}_\alpha\widehat s+(k_1s_\alpha +k_{-1}+k_2) c^*- k_1s_\alpha y_\alpha^*\right)_{1\leq \alpha\leq N}\\
\delta_c^* {\cal D}\widehat c^*\\
\delta_e^* {\cal D}\widehat y^*
\end{pmatrix}
\]
yields the asserted result.\\
To prove part (c) we notice that the fast system 
   \begin{align}
   \dot s_\alpha&= 0, &1\leq \alpha\leq N \\
   \dot c^*_\alpha&= -(k_1s_\alpha +k_{-1}+k_2) c^*_\alpha+ k_1s_\alpha y_\alpha^*, &1\leq \alpha\leq N  \\
 \dot y^*_\alpha&= 0, &1\leq \alpha\leq N  
  \end{align}
possesses the first integrals
  \begin{align*}
   &\Psi_{s,\alpha}(\widehat s,\widehat c^*,\widehat y^*)=s_\alpha,\quad 1\leq \alpha\leq N\\
   &\Psi_{y^*,\alpha}(\widehat s,\widehat c^*,\widehat y^*)=y_\alpha^*,\quad 1\leq \alpha\leq N.
  \end{align*}
Thus, we get $\widetilde s_{\alpha,0}=s_{\alpha,0}$ and $\widetilde y_{\alpha,0}^*=y_{\alpha,0}^*$ for $1\leq \alpha\leq N$. For the sake of completeness we note that the appropriate initial value for $c^*_{\alpha}$ is given by 
  \[
   \widetilde c^*_{\alpha,0}=\frac{k_1s_{\alpha,0} y_{\alpha,0}^*}{k_1s_{\alpha,0} +k_{-1}+k_2},\ 1\leq \alpha\leq N.
  \]
\end{proof}

\subsection{The discretized reversible system}
Here we sketch the argument leading to the reduction in Subsection \ref{mainresultrev}. This is parallel to the irreversible case, hence we will present fewer details.
Using the scalings \eqref{diffscale} and \eqref{varscale} and rewriting the system in terms of $\widehat c^*$ and $\widehat y^*$,  \eqref{mm1disc}--\eqref{mm4disc} becomes
 \begin{align*}
   \dot s_\alpha&= \varepsilon\delta_s^* {\cal D}_\alpha\widehat s+\varepsilon(k_1s_\alpha +k_{-1}) c^*_\alpha-\varepsilon k_1s_\alpha y_\alpha^*, &1\leq \alpha\leq N \\
   \dot c_\alpha^*&= \varepsilon\delta_c^* {\cal D}_\alpha\widehat c^*-(k_1s_\alpha +k_{-1}+k_2+k_{-2}p_\alpha) c_\alpha^*+ (k_1s_\alpha+k_{-2}p_\alpha) y_\alpha^*, &1\leq \alpha\leq N  \\
 \dot y^*_\alpha&= \varepsilon\delta_e^* {\cal D}_\alpha\widehat y^* +\varepsilon \delta{\cal D}_\alpha\widehat c^*, &1\leq \alpha\leq N  \\
 \dot p_\alpha&= \varepsilon\delta_p^* {\cal D}_\alpha\widehat p + \varepsilon(k_2+k_{-2}p_\alpha) c_\alpha^*-\varepsilon k_{-2}p_\alpha y_\alpha^*, &1\leq \alpha\leq N.  
  \end{align*}
The computation of the reduced system proceeds as for the irreversible reaction; the only difference lies in the choice of
\[
\mu=\big(-(k_1s_\alpha +k_{-1}+k_2+k_{-2}p_\alpha) c^*_\alpha+ (k_1s_\alpha+k_{-2}p_\alpha) y_\alpha^*\big)_{1\leq \alpha\leq N}  
\]
and
\[
 P=\begin{pmatrix}0\\ I_N\\ 0 \\ 0\end{pmatrix}.
\]
One obtains:
\begin{proposition}\label{revdiscred}
For $\delta_c^*-\delta_e^*=\O(1)$, the reduced system is given by 
  \begin{align*}
   s_\alpha^\prime&= \delta_s^* {\cal D}_\alpha\widehat s-\frac{(k_1k_2s_\alpha-k_{-1}k_{-2}p_{\alpha})y_\alpha^*}{k_1s_\alpha+k_{-1}+k_2+k_{-2}p_\alpha}, &1\leq \alpha\leq N  \\
   {y_\alpha^*}^\prime&= \delta_e^*{\cal D}_\alpha\widehat  y^*+\delta {\cal D}_\alpha \left(\frac{(k_1s_\beta+k_{-2}p_{\beta})y_\beta^*}{k_1s_\beta+k_{-1}+k_2+k_{-2}p_\alpha}\right)_{1\leq \beta\leq N},  &1 \leq\alpha\leq N \\
   p_\alpha^\prime&= \delta_p^* {\cal D}_\alpha\widehat p+\frac{(k_1k_2s_\alpha-k_{-1}k_{-2}p_{\alpha})y_\alpha^*}{k_1s_\alpha+k_{-1}+k_2+k_{-2}p_\alpha}, &1\leq \alpha\leq N 
  \end{align*}
on the asymptotic slow manifold determined by \[c^*_\alpha=\frac{(k_1s_\alpha+k_{-2}p_\alpha) y_\alpha^*}{k_1s_\alpha +k_{-1}+k_2+k_{-2}p_\alpha},\ 1\leq \alpha\leq N.\]
For $\delta_c^*-\delta_e^*=\O(\varepsilon)$, formally setting $\delta=0$ in the above system yields the correct reduction.\\
Appropriate initial values on the asymptotic slow manifold are given by
\[\tilde s_{\alpha,0}=s_{\alpha,0},\quad \tilde y^*_{\alpha,0}=y^*_{\alpha,0}\quad \text{and}\quad \tilde p_{\alpha,0}=p_{\alpha,0}.\] 
\end{proposition}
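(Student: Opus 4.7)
The plan is to run the same Tikhonov--Fenichel procedure that was executed in detail in the proof of Proposition \ref{irrevdiscred}, with the only real change being the presence of a $p$-block and the additional summand $k_{-2}p_\alpha$ in the factor $\mu$. First I write the scaled discretized reversible system in slow time $\tau=\varepsilon t$ in the form $x'=h(x,\varepsilon)=h^{(0)}(x)+\varepsilon h^{(1)}(x)$, with state $x=(\widehat s,\widehat c^*,\widehat y^*,\widehat p)$. The already indicated factorization $h^{(0)}=P\mu$ together with
\[
\mu_\alpha=-(k_1s_\alpha+k_{-1}+k_2+k_{-2}p_\alpha)c_\alpha^*+(k_1s_\alpha+k_{-2}p_\alpha)y_\alpha^*
\]
immediately identifies the zero set of $h^{(0)}$ as the claimed slow manifold.

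Next I would compute $D\mu=(M_1,M_2,M_3,M_4)$, where each $M_j$ is a diagonal $N\times N$ block collecting the partial derivatives of the $\mu_\alpha$ with respect to the $s$-, $c^*$-, $y^*$- and $p$-coordinates respectively; explicitly $M_2=\diag(-(k_1s_\alpha+k_{-1}+k_2+k_{-2}p_\alpha))$, whose entries are strictly negative on the nonnegative orthant, verifying the spectral hypothesis of Proposition \ref{agreduce}. Since $P$ selects only the $c^*$-block, one finds $D\mu\,P=M_2$ and
\[
Q=I-P\,M_2^{-1}D\mu=\begin{pmatrix}I_N&0&0&0\\-M_2^{-1}M_1&0&-M_2^{-1}M_3&-M_2^{-1}M_4\\0&0&I_N&0\\0&0&0&I_N\end{pmatrix},
\]
so the $s$-, $y^*$- and $p$-rows of $h^{(1)}$ are passed through unchanged, while the $c^*$-row is replaced by its slow-manifold value.

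The remaining work is then algebraic: substitute $c_\alpha^*=(k_1s_\alpha+k_{-2}p_\alpha)y_\alpha^*/(k_1s_\alpha+k_{-1}+k_2+k_{-2}p_\alpha)$ into the reaction contributions $(k_1s_\alpha+k_{-1})c_\alpha^*-k_1s_\alpha y_\alpha^*$ and $(k_2+k_{-2}p_\alpha)c_\alpha^*-k_{-2}p_\alpha y_\alpha^*$; after putting over a common denominator, the polynomial numerators collapse to $\pm(k_1k_2s_\alpha-k_{-1}k_{-2}p_\alpha)y_\alpha^*$, which is precisely the Michaelis--Menten-type rate appearing in the statement. The $y^*$-row picks up $\delta_e^*\mathcal D_\alpha\widehat y^*+\delta\,\mathcal D_\alpha(\,c_\beta^*\,)_\beta$, and substituting the slow-manifold expression for $c_\beta^*$ gives the second displayed equation. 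In the $\delta_c^*-\delta_e^*=\mathcal O(\varepsilon)$ regime the term $\delta\,\mathcal D\widehat c^*$ is absorbed into $\varepsilon h^{(1)}$ and no longer contributes to $h^{(1)}$, so the same procedure yields the reduction with $\delta$ set to zero.

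Finally, the fast system obtained from $h^{(0)}$ alone has $\dot s_\alpha=\dot y_\alpha^*=\dot p_\alpha=0$ and a scalar linear ODE for each $c_\alpha^*$, so $s_\alpha,y_\alpha^*,p_\alpha$ are first integrals of the layer equation and the orthogonal projection of the initial data onto the slow manifold simply retains these coordinates, proving the initial-value assertion. I expect the only mildly nontrivial step to be the cancellation in the reaction numerators after substitution; everything else is structurally identical to the irreversible proof and requires nothing beyond bookkeeping of the block structure of $D\mu$ and $Q$.
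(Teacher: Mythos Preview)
Your proposal is correct and follows essentially the same route as the paper, which merely sketches the argument by pointing to the choice of $\mu$ and $P$ and saying the computation ``proceeds as for the irreversible reaction.'' One small wording point: the initial-value projection is along the fast fibres (level sets of the first integrals $s_\alpha,y_\alpha^*,p_\alpha$ of the layer equation), not an orthogonal projection; in this particular setting the two coincide on the relevant coordinates, so your conclusion is unaffected.
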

\bibliography{bibliography}
\bibliographystyle{abbrv}

\end{document}